
\documentclass[a4paper, 11pt]{amsart}

\usepackage{amsfonts,amssymb,mathtools,amsmath}
\usepackage{textcomp, color}
\usepackage{hyperref}
\usepackage[capitalise]{cleveref}
\usepackage{wasysym}

\theoremstyle{plain}

\newtheorem*{thmA}{Theorem A}
\newtheorem*{thmB}{Theorem B}

\newtheorem{thm}{Theorem}[section]
\newtheorem{lem}[thm]{Lemma}
\newtheorem{pro}[thm]{Proposition}
\newtheorem{cor}[thm]{Corollary}

\theoremstyle{definition}

\newtheorem{dfn}[thm]{Definition}

\newtheorem{rmk}[thm]{Remark}

\newcommand{\Z}{\mathbb{Z}}

\newcommand{\C}{\mathbb{C}}
\renewcommand{\P}{\mathbb{P}}

\DeclareMathOperator{\Id}{Id}

\DeclareMathOperator{\Aut}{Aut}


\begin{document}

\title{Purely (non-) strongly real Beauville $p$-groups}

\author[\c{S}.\ G\"ul]{\c{S}\"ukran G\"ul}
\address{Department of Mathematics\\ University of the Basque Country UPV/EHU\\
48080 Bilbao, Spain}
\email{sukran.gul@ehu.eus}

\keywords{Beauville groups; strongly real Beauville $p$-groups; metacyclic $p$-groups; $p$-groups of class $2$\vspace{3pt}}

\thanks{ The author is supported by the Spanish Government, grant
MTM2017-86802-P, partly with FEDER funds, and by the Basque Government, grant IT974-16.}

\begin{abstract}
For every prime $p\geq5$, we give examples of Beauville $p$-groups whose Beauville structures are never strongly real.
This shows that there are purely non-strongly real nilpotent Beauville groups.
On the other hand, we determine infinitely many Beauville $2$-groups which are purely strongly real. 
This answers two questions formulated by Fairbairn in \cite{fai4}.
\end{abstract}

\maketitle

\section{Introduction}

A \emph{Beauville surface} (of unmixed type) is a compact complex surface isomorphic to a quotient $(C_1\times C_2)/G$ satisfying the following conditions:
\begin{enumerate}
\item 
$C_1$ and $C_2$ are algebraic curves of genus at least $2$,
and $G$ is a finite group acting freely on $C_1\times C_2$ by holomorphic transformations.
\item 
For $i=1,2$,  the group $G$ acts faithfully on each curve $C_i$ so that $C_i/G\cong \P_1(\C)$ and the covering map $C_i\rightarrow C_i/G$ is ramified over three points for $i=1,2$.
\end{enumerate}

If a group $G$ can be made to act on $C_1\times C_2$ as in (i) and (ii), then $G$ is said to be a \emph{Beauville group}.
The geometric definition of Beauville groups can be transformed into purely group-theoretical terms, see the remark after Definition 3.7 in \cite{cat2}.

\begin{dfn}
Let $G$ be a finite group. For a couple of elements $x, y \in G$, let
\[
\Sigma(x,y)
=
\bigcup_{g\in G} \,
\Big( \langle x \rangle^g \cup \langle y \rangle^g \cup \langle xy \rangle^g \Big).
\]
Then $G$ is called a \emph{Beauville group} if  $G$ is a $2$-generator group, and there exists a pair of generating sets $\{x_1,y_1\}$ and $\{x_2,y_2\}$ of $G$ such that 
$\Sigma(x_1,y_1) \cap \Sigma(x_2,y_2)=1$.
Then $ \{x_1,y_1\}$ and $\{x_2,y_2\}$ are said to form a \emph{Beauville structure\/} for $G$. 
\end{dfn}

Recall that a complex surface $S$ is called \emph{real} if there exists a biholomorphism  $\sigma \colon S\longrightarrow \overline{S}$ between $S$ and its complex conjugate surface $\overline{S}$ such that $\sigma^2=\Id$.
If we want Beauville surfaces to be real, then we work with the following sufficient condition on the Beauville structure, see page 38 in \cite{BCG}.

\begin{dfn}
Let $G$ be a Beauville group. We say that $G$ is \emph{strongly real\/} if there exist a Beauville structure $\{\{x_1,y_1\}, \{x_2,y_2\} \}$, an automorphism $\theta \in \Aut(G)$ and elements $g_i \in G$ such that 
\[
g_i \theta(x_i)g_i^{-1}=x_i^{-1} \ \ \text{and} \ \ g_i \theta(y_i)g_i^{-1}=y_i^{-1}
\]
for $i=1,2$.
Then the Beauville structure is called \emph{strongly real Beauville structure}.
\end{dfn}

We remark that a number of strongly real Beauville groups were given in \cite{fai, fai2, fai3, fai5, FGo, FJ, gul, gul2}.
Strongly real Beauville groups may have Beauville structures which are not strongly real. 
In this paper we are interested in the extreme cases given in the following definition.

\begin{dfn}
\label{dfn:purely}
A Beauville group $G$ is said to be a \emph{purely strongly real Beauville group} if all Beauville structures of $G$ are strongly real.
If none of the Beauville structures of $G$ are strongly real, then $G$ is said to be a \emph{purely non-strongly real Beauville group}.
\end{dfn}

Since for any abelian group the map $x \longmapsto x^{-1}$ is an automorphism, every abelian Beauville group is actually a purely strongly real Beauville group.
On the other hand, in \cite{fai4} Fairbairn showed that there are infinitely many purely non-strongly real Beauville groups by taking the direct product of the Mathieu group $M_{11}$ and any Beauville group of coprime order with $M_{11}$.
As these groups are not nilpotent, he asked if there exist any nilpotent purely non-strongly real Beauville groups.

In \cite{fai4} Fairbairn also constructed infinitely many non-abelian purely strongly real Beauville $p$-groups by considering one of the types of $2$-generator $p$-groups of nilpotency class $2$.
Namely, he showed that for $p\geq 5$ and $n\geq r\geq 1$, the group
\begin{equation}
\label{class2-case1}
G=\langle  x, y,z \mid  x^{p^n}=y^{p^n}=z^{p^r}=[x,z]=[y,z]=1, [x,y]=z\rangle
\end{equation}
is a purely strongly real Beauville group.
Since in his result $p\geq 5$, he asked about the existence of  purely strongly real Beauville $2$-groups and $3$-groups.

Our first goal in this paper is to give infinitely many purely non-strongly real Beauville $p$-groups for every prime $p\geq 5$.

\begin{thmA}
Let $G$ be a non-abelian metacyclic Beauville $p$-group or a Beauville $p$-group of nilpotency class $2$ which is not as in \eqref{class2-case1}. Then $G$ is a purely non-strongly real Beauville $p$-group.
\end{thmA}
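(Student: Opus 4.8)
The plan is to show that for these two families of $p$-groups, the strong reality condition in Definition~1.4 forces algebraic constraints on the orders of the generating elements that are incompatible with the constraints required for the pair of generating sets to form a Beauville structure. The key observation is that the condition $g_i\theta(x_i)g_i^{-1}=x_i^{-1}$ together with $g_i\theta(y_i)g_i^{-1}=y_i^{-1}$ means that a single automorphism $\theta$ (composed with an inner automorphism by $g_i$) inverts both generators of each generating system simultaneously. The first step is to pass to the abelianization $G/G'$, where the inner automorphism disappears and $\theta$ descends to an automorphism $\bar\theta$ inverting the images $\bar x_i$ and $\bar y_i$ for \emph{both} $i=1,2$ at once. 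In the metacyclic and class-$2$ cases the abelianization is a well-understood abelian $p$-group, and I would exploit the fact that an element inverting a generator acts by $-1$ on the corresponding cyclic factor.

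The main technical step is to understand the automorphism groups in each case and extract the numerical constraint. For a non-abelian metacyclic $p$-group one has a presentation of the standard form, and the automorphism group is known; similarly for the $2$-generator class-$2$ groups not of type~\eqref{class2-case1}, the remaining types (after the classification of such groups) have abelianizations of the form $C_{p^a}\times C_{p^b}$ with $a\neq b$ in the relevant cases, or a non-symmetric commutator structure. I would compute how $\theta$ must act on $G/G'$ and on $G'$ and track its effect on the commutator $[x_i,y_i]$: since $\theta$ is an automorphism, $\theta([x_i,y_i])=[\theta(x_i),\theta(y_i)]$, and conjugating by $g_i$ inverts both factors, so $g_i\theta([x_i,y_i])g_i^{-1}=[x_i^{-1},y_i^{-1}]=[x_i,y_i]$ modulo higher terms (in class $2$ the commutator is central, giving $[x_i^{-1},y_i^{-1}]=[x_i,y_i]$). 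This will show $\theta$ fixes $G'$ pointwise (or acts as identity on the relevant central part), which is compatible, but the genuine obstruction comes from requiring the \emph{same} $\theta$ to invert the abelian images of the second generating pair as well, combined with the Beauville condition that the two systems have trivially intersecting $\Sigma$'s.

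The heart of the argument is that the Beauville condition forces the two generating pairs to be ``independent'' in a precise sense: $\langle\bar x_1,\bar y_1\rangle$-type data and $\langle\bar x_2,\bar y_2\rangle$-type data cannot both be simultaneously inverted by one linear map of $G/G'$ unless $G/G'$ admits $-\Id$ as an automorphism compatible with \emph{all four} elements and the element orders match up. In the cases excluded from~\eqref{class2-case1}, the asymmetry (either metacyclicity forcing an unequal split, or the commutator relation forcing $z$ to have order dividing only one generator's order) means the map inverting $x_i$ need not invert $y_i$ through the \emph{same} $\theta$. Concretely, I expect to reduce to a statement that $\theta$ restricted to $G/G'$ must be $-\Id$, and then show that $-\Id$ fails to lift to an automorphism of $G$ inverting the generators once the class-$2$ commutator or metacyclic relation is imposed — this is exactly what distinguishes the excluded groups from~\eqref{class2-case1}, where the symmetric relation $[x,y]=z$ with $x,y$ of equal order makes $x\mapsto x^{-1},y\mapsto y^{-1}$ an honest automorphism.

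The hard part will be handling the metacyclic case and the class-$2$ case uniformly, since their automorphism groups look quite different, and in verifying that the failure of strong reality is \emph{total} (no Beauville structure works) rather than partial. I anticipate the main obstacle is the bookkeeping needed to rule out strong reality for \emph{every} choice of Beauville structure simultaneously: it is not enough to exhibit one bad structure, so I would argue at the level of the abelianization, showing that the necessary condition ``$\bar\theta=-\Id$ inverts all four abelianized generators and lifts compatibly'' is structure-independent and fails identically. Once the obstruction is phrased purely in terms of $G/G'$ and $G'$, it applies to any generating pair, which is precisely what ``purely non-strongly real'' demands.
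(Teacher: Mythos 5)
Your central thread is the right one and is essentially the paper's own argument: pass to a characteristic abelian quotient on which inner automorphisms act trivially, observe that inverting both members of a generating pair forces the induced map to be $-\Id$ there, and then use the explicit description of $\Aut(G)$ to show that no automorphism of $G$ induces $-\Id$. The paper runs this on the Frattini quotient $G/\Phi(G)$ rather than on $G/G'$, but since the decisive congruence is modulo $p$ the two choices are interchangeable. Concretely, by the Menegazzo-type descriptions (\cref{automorphism-powerful} and \cref{automorphism-class 2}), every automorphism sends $b\mapsto b^{1+rp^{e-i}}a^{s}$ in the metacyclic case and $a\mapsto a^{1+mp^{e-i}}b^{n}c_a$ in the class-$2$ case; since $e-i\geq 1$ in both families and $p$ is odd, the exponent $1+rp^{e-i}$ (resp.\ $1+mp^{e-i}$) is never $\equiv -1 \pmod{p}$, so the relevant coordinate of each generator must be divisible by $p$, contradicting generation. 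That verification is the entire substance of the proof; you have only asserted it, but the assertion is correct and the automorphism descriptions you would need are exactly the ones the paper imports from Menegazzo.

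One part of your proposal should be discarded: the repeated suggestion that the obstruction involves the second generating pair and the condition $\Sigma(x_1,y_1)\cap\Sigma(x_2,y_2)=1$. The Beauville condition plays no role whatsoever. The paper proves the stronger statement that for \emph{every} generating pair $\{x,y\}$ of $G$ there exist no $\theta\in\Aut(G)$ and $g\in G$ with $\theta(x)=(x^{-1})^{g}$ and $\theta(y)=(y^{-1})^{g}$; purely non-strong reality then follows for free, exactly as you note in your final paragraph. Trying to make the two systems interact would only add irrelevant case distinctions. Likewise, the mere asymmetry $G/G'\cong C_{p^a}\times C_{p^b}$ with $a\neq b$ is not in itself an obstruction, since $-\Id$ is an automorphism of every abelian group; what matters is that the defining relation tying $G'$ to a power of a generator prevents $-\Id$ from lifting to $G$, which is precisely the mod-$p$ computation above.
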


The second main result in this paper  shows that there are infinitely many 
purely strongly real Beauville $2$-groups.

\begin{thmB}
For every $e\geq 2$, the group
\begin{multline*}
G=
\langle x,y,z,t,w \mid x^{2^e}=y^{2^e}=z^{2^{e-1}}=t^{2^{e-1}}=w^{2^{e-1}}=1,
\\
[y,x]=z, [z,x]=t, [z,y]=w \rangle
\end{multline*}
is a purely strongly real Beauville $2$-group.
 \end{thmB}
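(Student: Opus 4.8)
The plan is to first pin down the structure of $G$ and then manufacture a single inverting automorphism that handles every Beauville structure. Writing $G=\langle x,y\rangle$, I would read off from the relations that $G'=\langle z,t,w\rangle$, that $\gamma_3(G)=\langle t,w\rangle$ is central with $\gamma_4(G)=1$, so that $G$ has nilpotency class $3$, and that $G$ is a finite $2$-group with $G/\Phi(G)\cong C_2\times C_2$. I would then record the orders of $x,y,z,t,w$ and apply the standard reduction of the Beauville condition for $2$-groups: since a nontrivial cyclic $2$-group contains a unique involution, the condition $\Sigma(x_1,y_1)\cap\Sigma(x_2,y_2)=1$ holds precisely when no involution lying at the bottom of $\langle a\rangle$ (for $a\in\{x_1,y_1,x_1y_1\}$) is conjugate in $G$ to an involution at the bottom of $\langle b\rangle$ (for $b\in\{x_2,y_2,x_2y_2\}$). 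Computing these involutions, which are the $2^{e-1}$-th powers, and their $G$-conjugacy classes would both exhibit a Beauville structure (so that $G$ is indeed a Beauville group) and, more importantly, describe all generating pairs that occur in Beauville structures.

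Next I would produce the inverting automorphism. A direct commutator computation shows that the assignment $x\mapsto x^{-1}$, $y\mapsto y^{-1}$ forces $t\mapsto t^{-1}$, $w\mapsto w^{-1}$ and $z\mapsto zw^{-1}t^{-1}$, and that these assignments respect all defining relations; since the images generate $G$, this defines an automorphism $\theta$, and one checks $\theta^2=\Id$. The single feature I will exploit throughout is that, because $-1$ is an automorphism of the abelian group $G/G'\cong C_{2^e}\times C_{2^e}$, the map $\theta$ induces inversion on $G/G'$; hence $u\,\theta(u)\in G'$ for every $u\in G$.

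With $\theta$ in hand, strong reality of a structure reduces to a commutator problem. For a generating pair $\{a,b\}$, set $d_a=a\,\theta(a)$ and $d_b=b\,\theta(b)$, which lie in $G'$ by the previous step; a short manipulation shows that the requirements $g\theta(a)g^{-1}=a^{-1}$ and $g\theta(b)g^{-1}=b^{-1}$ are equivalent to the simultaneous system $[a^{-1},g]=d_a$ and $[b^{-1},g]=d_b$. (Equivalently, one may phrase the whole argument as: for every generating pair the map $a\mapsto a^{-1},\,b\mapsto b^{-1}$ extends to an automorphism $\psi_{a,b}$, and all the $\psi_{a,b}$ lie in a single coset of $\Inn(G)$; taking $\theta=\psi_{x_1,y_1}$ then inverts the first pair with $g_1=1$ and produces $g_2$ for the second.) To solve the system I would work first modulo $\gamma_3(G)$, where commutators are bilinear: writing $a\equiv x^{i_a}y^{j_a}$ and $b\equiv x^{i_b}y^{j_b}$ modulo $G'$, the $z$-components of the two equations form a linear system whose determinant is $i_a j_b-i_b j_a$, which is odd precisely because $\{a,b\}$ is a generating pair, hence a unit modulo $2^{e-1}$; the system is therefore uniquely solvable at this level.

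The \textbf{main obstacle} is the remaining $\gamma_3(G)$-part. After solving modulo $\gamma_3(G)$ there are residual obstructions in $\langle t,w\rangle$ attached to both $a$ and $b$, while only a few free parameters remain in $g$ (essentially the $z$-exponent and the top bits of the $x,y$-exponents). The crucial point I expect to verify is that these scalar obstructions are not independent: the Hall--Witt identity, together with the specific relations $[z,x]=t$ and $[z,y]=w$ which make $z$ act on $\gamma_3(G)$ in a controlled way, forces a linear relation among them, so that the reduced system is solvable with the available freedom (if necessary one can also pre-adjust $\theta$ on $G'$ to kill part of the $\gamma_3(G)$-components of $d_a,d_b$). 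Establishing this dependency and the resulting solvability \emph{uniformly} over all Beauville pairs $\{a,b\}$ is the heart of the proof; once it is in place, applying it to both pairs of an arbitrary Beauville structure yields the required $\theta$, $g_1$, $g_2$, and everything else is bookkeeping.
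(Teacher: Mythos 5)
Your overall architecture matches the paper's: fix the single automorphism $\theta$ with $\theta(x)=x^{-1}$, $\theta(y)=y^{-1}$ (legitimate because $\exp G=2^e$ and $\exp G'=2^{e-1}$ force every generating pair to satisfy the defining relations, so one may take $x_1=x$, $y_1=y$), note that $g\theta(a)g^{-1}=a^{-1}$ is equivalent to $[a^{-1},g]=a\theta(a)\in G'$, and solve the two commutator equations simultaneously for one $g$. Your mod-$\gamma_3(G)$ analysis, with the odd determinant $i_aj_b-i_bj_a$ coming from generation, is also sound. But there is a genuine gap exactly where you flag it yourself: you never establish that the residual obstructions in $\gamma_3(G)=\langle t,w\rangle$ can be absorbed by the freedom left in $g$. ``I expect to verify that these scalar obstructions are not independent'' is the entire content of the theorem at that point: a priori the pair of obstructions lives in $\langle t,w\rangle\times\langle t,w\rangle\cong (C_{2^{e-1}})^4$, while after the mod-$\gamma_3(G)$ step the remaining parameters of $g$ (its $z$-exponent and two top bits of the $x,y$-exponents) sweep out only a much smaller subset, so solvability is far from automatic and does not follow from a generic appeal to the Hall--Witt identity. (The existence of a Beauville structure, which your unique-involution reduction is meant to supply, is likewise only sketched, but that is standard; the $\gamma_3$ step is the real issue.)

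The paper closes this gap by concrete computations your sketch does not reach. It first normalizes, using \cref{reducing case distinctions} and the fact that $\theta$ inverts $Z(G)=\langle t,w\rangle$, to $u=x^{1+2i_1}y^{2j_1}z^{k_1}$ and $v=y^{1+2j_2}x^{2i_2}z^{k_2}$; it then proves the key identity $u\theta(u)=[u^{-1},y^{2k_1n-2j_1}]$ (and its mirror for $v$), so that each equation alone is solvable and the joint problem becomes finding $g$ in the intersection of two explicit cosets of centralizers; it computes $C_G(u)=\langle u\rangle Z(G)$, converting the problem into congruences \eqref{powers of x}, \eqref{powers of y}, \eqref{powers of z} in exponents $R,S$; and finally it verifies that the $\gamma_3$-level congruence \eqref{powers of z} is obtained by multiplying \eqref{powers of x} and \eqref{powers of y} and dividing by $2$ --- which is precisely the ``linear relation among the obstructions'' you hoped for, but which requires this normal form and an actual calculation rather than a structural identity. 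Until you supply some version of these computations (or justify the pre-adjustment of $\theta$ on $G'$ that you mention in passing, including why it remains an automorphism and works uniformly over all generating pairs), the proof is incomplete at its central step.
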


This paper is organized as follows:
in \cref{sec1}, we characterize non-abelian metacyclic Beauville $p$-groups and Beauville $p$-groups of nilpotency class $2$, and then
we also determine their automorphism groups.
In \cref{sec3}, we prove our main theorems.

\vspace{10pt}

\noindent
\textit{Notation.\/}
If $p$ is a prime and $G$ is a finite $p$-group, then $G^{p^i}=\langle g^{p^i} \mid g \in G \rangle$.
The exponent of $G$, denoted by $\exp G$, is the maximum of the orders of all elements of $G$.

\section{ Metacyclic Beauville $p$-groups and Beauville $p$-groups of nilpotency class $2$}
\label{sec1}

In this section, we will first give a classification of non-abelian metacyclic Beauville $p$-groups and Beauville $p$-groups of nilpotency class $2$. 
Then we will determine their automorphism groups.

\begin{thm}
\label{presentation-metacyclic}
Let $G$ be a non-abelian metacyclic Beauville $p$-group of exponent $p^e$.
Then $p\geq 5$ and $G$ has the following presentation:
\begin{equation}
\label{powerful beauville}
G=\langle a,b \mid a^{p^e}=b^{p^e}=1, [a,b]=a^{p^i} \rangle,
\end{equation}
where  $1 \leq i \leq e-1$.
\end{thm}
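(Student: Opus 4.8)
The plan is to combine the known classification of metacyclic $p$-groups with the standard reduction of the Beauville condition to the behaviour of minimal (order $p$) subgroups. I would start from the fact that every non-abelian metacyclic $p$-group has a cyclic normal subgroup $\langle a\rangle$ with cyclic quotient generated by the image of $b$, giving relations $a^{p^m}=1$, $b^{p^n}=a^{p^c}$ and $a^b=a^{1+p^d}$ (for $p$ odd; the few extra $2$-group types are treated the same way and get excluded below). Non-abelianness forces the action to be nontrivial, so $1\le d<m$, and then $[a,b]=a^{-1}a^b=a^{p^d}$ is \emph{automatically} a power of $a$. The target presentation \eqref{powerful beauville} therefore amounts to three claims: that after a change of generators one may take $m=e$ with $p^e=\exp G$, that the extension splits so that $b$ may be chosen with $b^{p^e}=1$, and that $d$ plays the role of $i$ with $1\le i\le e-1$, the bounds coming respectively from $G$ being non-cyclic and non-abelian.

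The engine of the argument is the observation that in a $p$-group two cyclic subgroups meet nontrivially exactly when they share their unique subgroup of order $p$; hence $\Sigma(x_1,y_1)\cap\Sigma(x_2,y_2)=1$ is equivalent to saying that no $G$-conjugate of a minimal subgroup of $\langle x_1\rangle,\langle y_1\rangle,\langle x_1y_1\rangle$ equals a $G$-conjugate of a minimal subgroup of $\langle x_2\rangle,\langle y_2\rangle,\langle x_2y_2\rangle$. The decisive point for metacyclic groups is that the socle $Z_0=\langle a^{p^{m-1}}\rangle$ of $\langle a\rangle$ is characteristic in $\langle a\rangle$, hence normal, hence central of order $p$. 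Consequently any two cyclic subgroups that both contain $Z_0$ already intersect in $Z_0$, and this cannot be removed by conjugation. Thus in a Beauville structure the elements of at most one of the two triples may have cyclic subgroups containing $Z_0$, i.e. all of $x_2,y_2,x_2y_2$ (say) must have minimal subgroup different from $Z_0$ — a strong restriction on their orders and positions.

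From here I would extract the numerical restrictions. Passing to $G/\Phi(G)\cong\mathbb{F}_p^2$, each generating pair determines three pairwise distinct points of $\mathbb{P}^1(\mathbb{F}_p)$, namely the lines spanned by $\overline{x_i},\overline{y_i},\overline{x_iy_i}$; combining this with the $Z_0$-analysis I would show that a Beauville structure forces the two triples to determine \emph{six distinct} points, so $|\mathbb{P}^1(\mathbb{F}_p)|=p+1\ge 6$ and hence $p\ge 5$, excluding $p=2,3$ at once. The same analysis, run along each admissible line, forces the relevant elements to have order exactly $\exp G=p^e$ and to avoid $Z_0$ outside the designated triple; tracking this through the relations forces $\langle a\rangle$ to be homocyclic with $\langle b\rangle$, that is $m=e$, and forces the extension to split, so that $b$ can be replaced by a generator with $b^{p^e}=1$. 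Finally $1\le d\le e-1$ is precisely the statement that $G$ is neither cyclic ($d=0$ would make $G/G'$ cyclic, hence $G$ cyclic) nor abelian ($d\ge e$ would make $[a,b]=1$), and setting $i=d$ yields \eqref{powerful beauville}.

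The main obstacle, I expect, is converting the $Z_0$-avoidance and same-line analysis into the exact conclusions that both generators have maximal order and that the extension splits: one must compute, for a general element $v=a^{\alpha}b^{\beta}$, its order and whether $a^{p^{m-1}}\in\langle v\rangle$, and do so uniformly across the maximal subgroups of $G$. This relies on the regularity of $G$ (available since $p\ge 5$ and the nilpotency class is small) to control $p$-th powers, together with careful bookkeeping of the parameters $m,n,c,d$; by contrast, the counting in $\mathbb{P}^1(\mathbb{F}_p)$ and the identification of the commutator as $a^{p^i}$ are comparatively routine.
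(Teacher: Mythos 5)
The paper's own proof of this theorem is a one-line citation: by \cite[Corollary 2.9]{FG}, a non-abelian metacyclic $p$-group is a Beauville group if and only if $p\geq 5$ and $G$ is a semidirect product of two cyclic groups of order $p^e$, from which the presentation \eqref{powerful beauville} is immediate. Your proposal instead sets out to reprove that classification from scratch. The ingredients you name (uniqueness of the minimal subgroup of a cyclic subgroup of a $p$-group, centrality of the socle of the normal cyclic factor, counting lines in $G/\Phi(G)$) are indeed the ones underlying the argument in \cite{FG}, but the two decisive steps are asserted rather than carried out, and one of them is misattributed.

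First, the necessity of six distinct points of $\mathbb{P}^1(\F_p)$ does not follow from the $Z_0$-analysis you present as the engine: normality of $Z_0=\langle a^{p^{m-1}}\rangle$ controls only that single socle (showing at most one triple may meet it, which is compatible with both triples avoiding $Z_0$ altogether) and says nothing about the other $p$ candidate socles. What actually forces the six lines to be distinct is that every element of $\{x_i,y_i,x_iy_i\}$ has order exactly $p^e$ and that the assignment $g\mapsto\langle g^{p^{e-1}}\rangle$ is constant on lines of $G/\Phi(G)$ with central values; this rests on regularity and on the dichotomy $|G^{p^{e-1}}|\in\{p,p^2\}$, where $|G^{p^{e-1}}|=p$ already destroys the Beauville property because each triple must contain an element of maximal order and all such elements then share the normal socle $G^{p^{e-1}}$. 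None of this appears in your write-up. Second, the actual content of the theorem beyond $p\geq 5$ --- that the Beauville condition forces $m=n=e$ and the splitting of the extension, so that $b$ may be chosen with $b^{p^e}=1$ --- is precisely the computation you defer as ``the main obstacle''; without it neither the bound $p\geq 5$ for a general metacyclic $G$ nor the presentation \eqref{powerful beauville} has been established. The plan is viable (it essentially retraces the proof of \cite[Corollary 2.9]{FG}), but as written it is an outline whose load-bearing computations are missing; the shortest correct route is simply to invoke that corollary, as the paper does.
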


\begin{proof}
By Corollary 2.9 in \cite{FG}, $G$ is a Beauville group if and only if
$p\geq 5$ and $G$ is a semidirect product of two cyclic groups of order $p^e$.
This implies that $G$ has the presentation given in \eqref{powerful beauville}.
\end{proof}

\begin{rmk}
Recall that a finite $p$-group $G$ is \emph{powerful} if $G'\leq G^p$ for $p$ odd, or if $G′\leq G^4$ for $p=2$.
Then the groups given in \cref{presentation-metacyclic} are powerful since $p$ is odd.
On the other hand, by Exercise 2.13 in \cite{DdSMS}, a 2-generator powerful $p$-group is metacyclic.
As a consequence, non-abelian metacyclic Beauville $p$-groups are exactly the same as non-abelian powerful Beauville $p$-groups.  
\end{rmk}

We next deal with  Beauville $p$-groups of nilpotency class $2$.

\begin{pro}
\label{p=2}
There is no Beauville $2$-group of nilpotency class $2$.
\end{pro}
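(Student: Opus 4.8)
The plan is to argue by contradiction. Suppose $G$ is a $2$-generator $2$-group of nilpotency class $2$ carrying a Beauville structure $\{\{x_1,y_1\},\{x_2,y_2\}\}$. First I would record the structural facts available here: since $G$ is $2$-generated we have $G/\Phi(G)\cong C_2\times C_2$, and since $G$ has class $2$ with two generators its derived subgroup $G'=\langle[x_1,y_1]\rangle$ is cyclic and central, while $\Phi(G)=G^2$. The point of working in $G/\Phi(G)$ is that the images of any generating pair behave rigidly: if $\{x,y\}$ generates $G$ then $\bar x,\bar y$ form a basis of $G/\Phi(G)\cong\F_2^2$ and $\overline{xy}=\bar x\,\bar y$, so $\bar x,\bar y,\overline{xy}$ are \emph{exactly} the three nontrivial elements of $G/\Phi(G)$. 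Consequently $\{\bar x_1,\bar y_1,\overline{x_1y_1}\}=\{\bar x_2,\bar y_2,\overline{x_2y_2}\}$: the two generating triples realise the very same three classes modulo $\Phi(G)$. This is precisely what fails for the metacyclic groups of \cref{presentation-metacyclic}, where $p\ge 5$ leaves enough room ($p+1\ge 6$ lines in $\F_p^2$) to keep the two triples disjoint; for $p=2$ there are only three lines and the triples are forced to coincide.

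The heart of the proof is then a lemma turning this forced coincidence of classes into a genuine common element of $\Sigma(x_1,y_1)\cap\Sigma(x_2,y_2)$. Concretely I would pick $u\in\{x_1,y_1,x_1y_1\}$ and $v\in\{x_2,y_2,x_2y_2\}$ with $\bar u=\bar v$, so that $v=uw$ with $w\in\Phi(G)=G^2$, and produce a nontrivial element lying simultaneously in a conjugate of $\langle u\rangle$ and a conjugate of $\langle v\rangle$. The natural candidate is an involution: each $\langle u\rangle$ contains the unique involution $u^{o(u)/2}$, and these bottom involutions are what I would compare. The main computational tool is the class-$2$ power formula $(ab)^n=a^n b^n[b,a]^{\binom n2}$, which lets me expand $v^{o(v)/2}=(uw)^{o(v)/2}$ and match it against $u^{o(u)/2}$; the cyclicity of $G'$ keeps the commutator corrections $[w,u]^{\binom n2}$ under control, as they all live in the single cyclic group $\langle[x_1,y_1]\rangle$. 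In favourable cases, for instance when $G$ has exponent $4$, the squaring map essentially factors through $G/\Phi(G)$, the three bottom involutions of each pair agree, and a shared class forces literally the same involution, giving the desired element of $\Sigma(x_1,y_1)\cap\Sigma(x_2,y_2)$.

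The hard part will be the general, inhomogeneous case, where the generators of the two pairs need not have the same order and $u^{o(u)/2}$ is no longer determined by $\bar u$ alone (already in abelian examples such as $C_8\times C_4$ two elements with the same image in $G/\Phi(G)$ can have distinct bottom involutions and cyclic groups meeting trivially). To handle this I would exploit the conjugacy built into $\Sigma$: in class $2$ the conjugacy class of $g$ is the coset $g\,[g,G]$ with $[g,G]\le G'$ cyclic and central, so the conjugates of $u^{o(u)/2}$ sweep out a whole coset of $[u,G]^{o(u)/2}$, and I would show that this coset meets the coset attached to $v$ inside the socle $\Omega_1(Z(G))$. An alternative, and probably cleaner, route is to pass to the quotient $G/G^4$ (still of class $\le 2$) to pin down the bottom involutions there and lift the coincidence back using the central cyclic structure of $G'$. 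Either way, the essential obstacle is the scarcity of classes when $p=2$: once the two triples are forced to coincide modulo $\Phi(G)$, the commutator calculus of class-$2$ groups leaves no room to separate all six cyclic subgroups, so $\Sigma(x_1,y_1)\cap\Sigma(x_2,y_2)\ne 1$ and no Beauville structure can exist.
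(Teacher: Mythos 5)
Your first paragraph is correct and coincides with the paper's starting point: $G$ has exactly three maximal subgroups, so every generating triple $\{x,y,xy\}$ meets each of them, and the two triples of a putative Beauville structure represent the same three classes modulo $\Phi(G)$. The gap is in the second half, which you explicitly leave open: converting a shared class mod $\Phi(G)$ into a shared nontrivial element of $\Sigma(x_1,y_1)\cap\Sigma(x_2,y_2)$. Your own $C_8\times C_4$ remark shows exactly why matching bottom involutions of arbitrary elements in the same class fails, and neither of your proposed repairs is carried out. The conjugacy route in particular cannot suffice on its own: in class $2$ the conjugates of $u^{o(u)/2}$ are the elements $u^{o(u)/2}[u,g]^{o(u)/2}$, which is often just the single element $u^{o(u)/2}$ (e.g.\ whenever $o(u)/2$ is at least $\exp G'$), so conjugation gives you essentially no extra room.

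The missing idea, which is how the paper closes the argument, is to choose one distinguished class rather than compare all three. Write $\exp G=2^e$. Since $G'$ is cyclic and consists of commutators, expanding $1=(xy)^{2^e}=x^{2^e}y^{2^e}[y,x]^{\binom{2^e}{2}}$ gives $\exp G'\leq 2^{e-1}$; and since every element of $\Phi(G)$ has the form $x^2c$ with $c\in G'$ central, also $\exp\Phi(G)\leq 2^{e-1}$. Hence some $a\in G\smallsetminus\Phi(G)$ has order $2^e$, and if $M$ is the maximal subgroup containing $a$, the class-$2$ power formula shows that the $2^{e-1}$-st power map is \emph{constant} on $M\smallsetminus\Phi(G)$:
\[
(ax^2c)^{2^{e-1}}=a^{2^{e-1}}(x^2c)^{2^{e-1}}[x^2c,a]^{\binom{2^{e-1}}{2}}=a^{2^{e-1}}[x,a]^{2\binom{2^{e-1}}{2}}=a^{2^{e-1}}.
\]
Since every generating triple meets $M\smallsetminus\Phi(G)$, the fixed nontrivial element $a^{2^{e-1}}$ lies in $\Sigma(x,y)$ for \emph{every} generating pair, and no Beauville structure can exist. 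This is precisely the uniformity your sketch lacks: by selecting the class carrying an element of maximal order, all order and case distinctions that derailed your bottom-involution comparison disappear.
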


\begin{proof}
Let $G$ be a $2$-generator $2$-group of nilpotency class $2$ of exponent $2^e$. 
Since the class is $2$, $G'$ is cyclic and hence it consists of commutators.
For every $x, y \in G$ we have
\[
1=(xy)^{2^e}=x^{2^e}y^{2^e}[y,x]^{\binom{2^e}{2}}=[y,x]^{\binom{2^e}{2}},
\]
and thus $\exp G'\leq 2^{e-1}$. 

On the other hand, let us see that $\exp \Phi(G)\leq 2^{e-1}$. 
Any element in $\Phi(G)$ can be written in the form $x^2c$ for some $x\in G$ and $c\in G'$. Since $G'\leq Z(G)$  and $\exp G'\leq 2^{e-1}$, we have
$(x^2c)^{2^{e-1}}=x^{2^e}c^{2^{e-1}}=1$.

Thus, as $\exp G=2^e$, there exists an element $a \in G\smallsetminus \Phi(G)$ which is of order $2^e$. 
Consider the maximal subgroup $M$ containing $a$. 
We claim that all elements in $M\smallsetminus \Phi(G)$ are of order $2^e$ and their $2^{e-1}$st powers coincide with $a^{2^{e-1}}$. 
Every element in $M\smallsetminus \Phi(G)$ is of the form $ab$ where $b\in \Phi(G)$.
If we write $b=x^2c$ for some $x\in G$ and $c\in G'$, then 
\[
(ax^2c)^{2^{e-1}}=a^{2^{e-1}}(x^2c)^{2^{e-1}}[x^2c, a]^{\binom{2^{e-1}}{2}}
=a^{2^{e-1}}[x,a]^{2\binom{2^{e-1}}{2}}=a^{2^{e-1}}.
\]
Since there are only three maximal subgroups in $G$,  for any generating set 
$\{x_1, y_1\}$ of $G$ one of the elements in 
$\{x_1, y_1, x_1y_1\}$ falls in the maximal subgroup $M$, and hence $a^{2^{e-1}} \in \Sigma (x_1, y_1)$. 
Thus, $G$ cannot be a Beauville group. 
\end{proof}

The following result, which gives the complete classification of $2$-generator $p$-groups of nilpotency class $2$, is  Theorem 1.1 in \cite{AMM}.

\begin{thm}
\label{presentation general}
Every $2$-generator $p$-group of order $p^n$ of nilpotency class $2$ corresponds to an ordered $5$-tuple of integers, $(\alpha, \beta, \gamma; \rho, \sigma)$, such that
\begin{enumerate}
\item
$\alpha \geq \beta \geq \gamma \geq 1$,
\item 
$\alpha+\beta+\gamma=n$,
\item
$0\leq \rho, \sigma \leq \gamma$,
\end{enumerate}
where $(\alpha, \beta, \gamma; \rho, \sigma)$ corresponds to the group presented by 
\[
G=\langle x,y \mid 
[x,y]^{p^{\gamma}}=[x,y,x]=[x,y,y]=1, \ x^{p^{\alpha}}=[x, y]^{p^{\rho}}, \ y^{p^{\beta}}=[x, y]^{p^{\sigma}}   \rangle.
\]
\end{thm}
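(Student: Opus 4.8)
The plan is to recover the presentation from the internal structure of a $2$-generator $p$-group $G$ of class exactly $2$. Since the class is $2$ we have $G'\le Z(G)$, and as $G=\langle x,y\rangle$ the derived subgroup is the cyclic central group $G'=\langle c\rangle$ with $c=[x,y]$; setting $p^{\gamma}=|c|$ the relations $[x,y]^{p^{\gamma}}=[x,y,x]=[x,y,y]=1$ hold automatically. The two facts that drive every later computation are the bilinearity of commutators, $[uv,w]=[u,w][v,w]$ and $[u,vw]=[u,v][u,w]$, and the class-$2$ power formula $(uv)^{m}=u^{m}v^{m}[v,u]^{\binom{m}{2}}$; together these turn all manipulations of generators into linear bookkeeping over $\Z/p^{\gamma}\Z$.

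Next I would read off $\alpha$ and $\beta$ from the abelianization. As $G$ is non-abelian, $G/G'$ is a non-cyclic $2$-generated abelian $p$-group, so $G/G'\cong\Z/p^{\alpha}\Z\times\Z/p^{\beta}\Z$ with $\alpha\ge\beta\ge1$; choosing $x,y$ to lift a basis gives $x^{p^{\alpha}}=c^{m}$ and $y^{p^{\beta}}=c^{k}$ for some exponents $m,k$. Counting orders, $|G|=|G'|\,|G/G'|=p^{\gamma+\alpha+\beta}$, forcing $\alpha+\beta+\gamma=n$. The inequality $\beta\ge\gamma$ then drops out of the calculus: since $y^{p^{\beta}}=c^{k}$ is central, bilinearity gives $c^{p^{\beta}}=[x,y]^{p^{\beta}}=[x,y^{p^{\beta}}]=[x,c^{k}]=1$, so $p^{\gamma}\mid p^{\beta}$. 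Together with $\gamma\ge1$ (non-abelianness) this establishes conditions (i) and (ii).

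The substantive step, and the one I expect to be the main obstacle, is normalising $m$ and $k$ to the $p$-power shape $c^{p^{\rho}},c^{p^{\sigma}}$ required by (iii). Here I would use the unit substitutions that leave the flag of conditions intact. Replacing $y$ by $y^{t}$ with $t$ coprime to $p$ sends $c\mapsto c^{t}$, which fixes the $y$-exponent but rescales the $x$-exponent by $t^{-1}$ modulo $p^{\gamma}$; the mirror move $x\mapsto x^{s}$ fixes the $x$-exponent and rescales the $y$-exponent. Writing $m=p^{\rho}u$ with $p\nmid u$ and choosing $t=u$ clears the unit from the $x$-relation, and an independent choice of $s$ then clears it from the $y$-relation, yielding $x^{p^{\alpha}}=c^{p^{\rho}}$ and $y^{p^{\beta}}=c^{p^{\sigma}}$ with $0\le\rho,\sigma\le\gamma$; the degenerate cases $m=0$ or $k=0$ are absorbed by setting the corresponding exponent equal to $\gamma$. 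The care required is to check that these two rescalings genuinely decouple — each acts on the other generator's exponent — so that clearing one unit does not reintroduce the other, and that neither disturbs $\alpha\ge\beta$.

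Finally I would verify that the presentation is faithful, i.e.\ that these relations are not too few. Using the commutator calculus, every word in $x,y$ reduces to a normal form $x^{i}y^{j}c^{l}$ with $0\le i<p^{\alpha}$, $0\le j<p^{\beta}$, $0\le l<p^{\gamma}$, so the abstractly presented group has order at most $p^{n}$; the evident surjection from it onto $G$ is then forced to be an isomorphism by comparing orders. This exhibits $G$ as the group attached to the tuple $(\alpha,\beta,\gamma;\rho,\sigma)$. The remaining, more delicate half of a full classification — that distinct admissible tuples give non-isomorphic groups, so that the correspondence is genuinely a bijection — would require showing the parameters are isomorphism invariants of $G$, which I would extract from the orders of $G$, $G'$ and $G/G'$ together with the refined structure of the power map.
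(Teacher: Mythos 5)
The paper does not prove this statement at all: it is imported verbatim as Theorem 1.1 of the cited work of Ahmad, Magidin and Morse \cite{AMM}, so there is no in-paper proof to compare against. Your self-contained argument for the direction actually used later in the paper --- that every $2$-generator $p$-group of order $p^n$ and class $2$ admits a presentation of the stated form with $(\alpha,\beta,\gamma;\rho,\sigma)$ satisfying (i)--(iii) --- is correct. The key steps all check out: $G'=\langle [x,y]\rangle$ is cyclic and central of order $p^{\gamma}$; lifting a basis of $G/G'\cong \Z/p^{\alpha}\Z\times\Z/p^{\beta}\Z$ gives $x^{p^{\alpha}},y^{p^{\beta}}\in G'$ and the order count $n=\alpha+\beta+\gamma$; the identity $1=[x,y^{p^{\beta}}]=[x,y]^{p^{\beta}}$ forces $\gamma\le\beta$; and the two unit substitutions $y\mapsto y^{t}$, $x\mapsto x^{s}$ do decouple exactly as you claim (each rescales only the other generator's $G'$-exponent, by $t^{-1}$ resp.\ $s^{-1}$ modulo $p^{\gamma}$), so the exponents can be normalised to $p^{\rho}$ and $p^{\sigma}$ in succession. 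The concluding order comparison between the abstractly presented group (of order at most $p^{\alpha+\beta+\gamma}$ by the normal form $x^{i}y^{j}c^{l}$) and $G$ itself correctly establishes faithfulness. You are also right to flag that you have not shown the correspondence is a bijection; indeed, with only conditions (i)--(iii) it is not injective (e.g.\ $(1,1,1;0,0)$ and $(1,1,1;0,1)$ both yield the extraspecial group of order $p^{3}$ and exponent $p^{2}$ for odd $p$), and the full classification in \cite{AMM} imposes further normalisation conditions to achieve uniqueness. Since the present paper only invokes the existence direction, your proof covers everything that is needed.
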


By using this result, we will characterize all Beauville $p$-groups of nilpotency class $2$.
Before that, we recall the definition of regular $p$-groups.

\begin{dfn}
A finite $p$-group $G$ is called \emph{regular} if for every $x, y \in G$,
$(xy)^p\equiv x^py^p \pmod{H^p}$ where $H=\langle x,y\rangle'$.
\end{dfn}

\begin{thm}
Let $G$ be a  Beauville $p$-group of nilpotency class $2$ and of exponent $p^e$.
Then $p\geq 5$ and  $G$ has the following presentation:
\begin{equation}
\label{class 2 beauville}
G=\langle a,b \mid a^{p^e}=[b,a]^{p^j}=[b,a,a]=[b,a,b]=1, b^{p^i}=[b,a]^{p^k} \rangle,
\end{equation}
where $0 \leq k \leq  j \leq i \leq e$ and $e=i+j-k$.
\end{thm}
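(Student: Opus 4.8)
The plan is to combine the classification of \cref{presentation general} with a direct analysis of which $5$-tuples $(\alpha,\beta,\gamma;\rho,\sigma)$ produce a Beauville group. First, by \cref{p=2} the prime $p$ must be odd, so $G$ has nilpotency class $2<p$ and is therefore regular; this is what lets me compute orders and $p$-th powers freely. Writing $c=[x,y]$, the fact that $p$ is odd gives $\binom{p^m}{2}\equiv 0 \pmod{p^m}$, so for a general element $w=x^sy^tc^u$ one obtains $w^{p^m}=x^{sp^m}y^{tp^m}c^{up^m}(c^{p^m})^{\mp st(p^m-1)/2}$. From this formula I read off that $G'=\langle c\rangle$ is cyclic and central of order $p^{\gamma}$, that $o(x)=p^{\alpha+\gamma-\rho}$ and $o(y)=p^{\beta+\gamma-\sigma}$, and a description of the exponent $p^e$; in particular the parameter $\gamma$ will play the role of $j$ in \eqref{class 2 beauville}.

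Next I would isolate the obstruction that governs everything. The socle element $s=c^{p^{\gamma-1}}$ is central of order $p$, hence fixed under conjugation, so for any generating pair $\{x_1,y_1\}$ one has $s\in\Sigma(x_1,y_1)$ if and only if $s$ already lies in one of $\langle x_1\rangle$, $\langle y_1\rangle$, $\langle x_1y_1\rangle$, equivalently if and only if one of these cyclic subgroups meets $G'=\langle c\rangle$ nontrivially. A Beauville structure forces $\Sigma(x_1,y_1)\cap\Sigma(x_2,y_2)=1$, so $s$ may belong to at most one of the two $\Sigma$-sets. Passing to $V=G/\Phi(G)\cong\F_p^2$, the three cyclic subgroups attached to a generating pair determine three distinct directions, and using the power formula above (proportional elements of maximal order have proportional $p^{e-1}$-th powers) I would show that two triples sharing a direction are forced to meet nontrivially inside $G^{p^{e-1}}$. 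Hence a Beauville structure needs six distinct directions among the $p+1$ available, so $p+1\ge 6$, i.e.\ $p\ge 5$; this rules out $p=3$ and complements \cref{p=2}.

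The heart of the matter, and the step I expect to be the main obstacle, is determining exactly when $\langle c\rangle$ (and in particular $s$) can be routed out of a full triple of cyclic subgroups. I would compute, for an arbitrary generator $w=x^sy^tc^u$, precisely when $\langle w\rangle\cap G'=1$, and then prove that two generating pairs all of whose six cyclic subgroups avoid $\langle s\rangle$ exist if and only if, after a suitable change of generators $a,b$, the defining powers can be balanced so that $a^{p^e}=1$ and $b^{p^i}=[b,a]^{p^k}$ with $e=i+j-k$. In tuple language these are precisely the two numerical identities $\rho=\gamma$ and $\alpha=\beta+\gamma-\sigma$, together with $j=\gamma$, $i=\beta$, $k=\sigma$; establishing that the Beauville condition forces exactly these relations and no others is where the careful bookkeeping of orders and of intersections with $G'$ across all admissible pairs is needed. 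Rewriting \cref{presentation general} under these identities yields the presentation \eqref{class 2 beauville}.

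Finally, for the converse I would check that every group \eqref{class 2 beauville} with $p\ge 5$ really is Beauville by exhibiting an explicit structure: take $\{a,b\}$ together with a second pair such as $\{ab,ab^{\lambda}\}$, where $\lambda$ is chosen (possible since $p\ge 5$) so that the six directions in $V$ are pairwise distinct, and then verify with the power formula that no nontrivial conjugate of a power from one triple equals one from the other. Because the only possible coincidences are central, this verification reduces to the already-understood behaviour of $\langle c\rangle$ and $G^{p^{e-1}}$. Combining the necessary conditions $p\ge 5$, $\rho=\gamma$ and $\alpha=\beta+\gamma-\sigma$ with this sufficiency check gives the stated presentation, with the constraints $0\le k\le j\le i\le e$ and $e=i+j-k$ inherited from $(\alpha,\beta,\gamma;\rho,\sigma)$.
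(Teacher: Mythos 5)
Your proposal follows the paper's skeleton at the start (the classification of \cref{presentation general}, the exclusion of $p=2$ via \cref{p=2}, and regularity from class $2<p$), but it diverges at the decisive point. The paper does not re-derive the Beauville criterion for regular $p$-groups: it simply invokes Corollary 2.6 and Proposition 2.7 of \cite{FG}, which say that a regular $p$-group of exponent $p^e$ is Beauville if and only if $p\geq 5$ and $|G^{p^{e-1}}|\geq p^2$, and that the latter is equivalent to $\langle x^{p^{e-1}}\rangle$ and $\langle y^{p^{e-1}}\rangle$ being distinct and nontrivial for a generating pair $\{x,y\}$. From this one immediately gets $o(x)=o(y)=p^e$ and $\langle x\rangle\cap\langle y\rangle=1$, and the numerical conclusion $\rho=\gamma$ then follows from $\alpha\geq\beta$ (which forces $\sigma\leq\rho$, ruling out the case $\sigma=\gamma$, $\rho<\gamma$). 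Your plan is to reprove this criterion from scratch inside the class-$2$ setting, which is a legitimate alternative route in principle.

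The problem is that you do not actually carry it out: the paragraph you yourself flag as ``the heart of the matter'' consists of statements of the form ``I would compute'' and ``I would show,'' and the claim that the Beauville condition forces exactly the identities $\rho=\gamma$ and $\alpha=\beta+\gamma-\sigma$ is asserted as the expected outcome rather than derived. This is precisely the content that the paper outsources to \cite{FG}, so as written the proposal has a genuine gap at its central step. There is also a conceptual wrinkle in your choice of obstruction: you work with the socle $c^{p^{\gamma-1}}$ of $G'$, whereas the invariant that actually controls the situation is the set of $p^{e-1}$-st powers (i.e.\ $G^{p^{e-1}}$, which in a regular group coincides with $\{g^{p^{e-1}}:g\in G\}$); these two subgroups are related only after nontrivial bookkeeping with the parameters $\rho,\sigma,\gamma$, and your sketch does not establish that every generating triple must capture the socle of $G'$ when $\rho,\sigma<\gamma$. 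Finally, note that the theorem only asserts necessity of the presentation, so your converse verification, while harmless, is not required; and the last cosmetic step of the paper (the isomorphism $a\mapsto x^{-1}$, $b\mapsto y$ converting $[x,y]$ into $[b,a]$) is missing from your account.
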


\begin{proof}
Let $G=\langle x, y \rangle$ have a presentation as in the previous theorem. 
By \cref{p=2}, $p$ is odd. 
Since $G$ is of class $2<p$, this implies that $G$ is a regular $p$-group. 
Then by Corollary 2.6 in \cite{FG}, we have $p\geq 5$ and $|G^{p^{e-1}}|\geq p^2$. 
Also according to Proposition 2.7 in \cite{FG},  the condition  $|G^{p^{e-1}}|\geq p^2$ holds if and only if $\langle x^{p^{e-1}}\rangle$ and $\langle y^{p^{e-1}}\rangle$ are distinct and non-trivial. 
Therefore, $\langle x \rangle \cap \langle y \rangle=1$ and $o(x)=o(y)=p^e$.

By the condition $\langle x \rangle \cap \langle y \rangle=1$, in the presentation of $G$ in \cref{presentation general}, we have either $\sigma=\gamma$ or $\rho=\gamma$.
On the other hand, since $p^e=o(x)=p^{\alpha+\gamma-\rho}$ and $p^e=o(y)=p^{\beta+\gamma-\sigma}$, the condition $\alpha\geq \beta$
implies that $\sigma\leq\rho$.
As a consequence, we necessarily have $\rho=\gamma$.
Hence $G$ has the following presentation:
\begin{equation*}
\label{first presentation-class 2}
\langle x, y \mid x^{p^e}=[x,y]^{p^j}=[x,y,x]=[x,y,y]=1, y^{p^i}=[x,y]^{p^k}\rangle,
\end{equation*}
where $0 \leq k \leq  j \leq i \leq e$ and $e=i+j-k$.

Now consider the group 
\[
L=\langle a, b \mid a^{p^e}=[b,a]^{p^j}=[b,a,a]=[b,a,b]=1, b^{p^i}=[b,a]^{p^k}\rangle,
\]
where $0 \leq k \leq  j \leq i \leq e$ and $e=i+j-k$.
Then one can easily check that the map $\theta \colon L \longrightarrow G$ defined by $\theta(a)=x^{-1}$ and $\theta(b)=y$ is an isomorphism.
This completes the proof.
\end{proof}

\begin{rmk}
\label{class 2-types}
Note that we have three main types of Beauville $p$-groups of nilpotency class $2$.
The first type is when $k=0$ in \eqref{class 2 beauville}. 
In this case we have $[b,a]=b^{p^i}$, and hence the group is powerful.
If $k=j$ then \eqref{class 2 beauville} coincides with the presentation given in  \eqref{class2-case1}.
In the last type we have $0< k < j$.
\end{rmk}

\vspace{10pt}

We continue this section by giving the automorphism groups of non-abelian metacyclic Beauville $p$-groups and  Beauville $p$-groups of nilpotency class $2$ which are not as in \eqref{class2-case1}.
To this purpose, we refer to the paper \cite{men} by Menegazzo.

\begin{pro}
\label{automorphism-powerful}
Let $p$ be an odd prime and let $G$ be a metacyclic $p$-group with the following presentation:
\begin{equation*}
G=\langle a,b \mid a^{p^e}=b^{p^e}=1, [a,b]=a^{p^i} \rangle,
\end{equation*}
where  $1 \leq i \leq e-1$.
Then a map $\theta$ defined on the generators $a$ and $b$ extends to an automorphism of $G$ if and only if
\[
\theta(a)= b^{mp^{e-i}}a^n \ \ \ \
\text{and} \ \ \ \
\theta(b)=b^{1+rp^{e-i}}a^{s},
\]
where $1\leq m, r\leq p^{i}$ and $1\leq n, s\leq p^e$ with $p\nmid n$. 
\end{pro}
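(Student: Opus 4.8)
The plan is to work throughout with the normal form $b^{j}a^{l}$ ($0\le j,l<p^{e}$) afforded by the split extension $G=\langle a\rangle\rtimes\langle b\rangle$. Writing $c=1+p^{i}$, the conjugation rule $a^{b}=a^{c}$ gives the multiplication law $(b^{j_1}a^{l_1})(b^{j_2}a^{l_2})=b^{j_1+j_2}a^{l_1c^{j_2}+l_2}$, from which one reads off the power formula $(b^{j}a^{l})^{k}=b^{jk}a^{l(c^{jk}-1)/(c^{j}-1)}$ and the commutator formula $[b^{j_1}a^{l_1},b^{j_2}a^{l_2}]=a^{l_1(c^{j_2}-1)+l_2(1-c^{j_1})}$; as a sanity check the latter returns $[a,b]=a^{c-1}=a^{p^{i}}$. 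The one number-theoretic input I will use repeatedly is that, $p$ being odd, the $p$-adic valuation satisfies $v_p\big((1+p^{i})^{k}-1\big)=i+v_p(k)$; in particular $c$ has order $p^{e-i}$ in $(\Z/p^{e})^{\times}$. I will prove the two implications separately.

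For necessity, let $\theta\in\Aut(G)$ and write $\theta(a)=b^{\mu}a^{n}$, $\theta(b)=b^{\delta}a^{s}$ in normal form. Passing to the abelianization $G/G'\cong\langle\bar a\rangle\times\langle\bar b\rangle$ with $\bar a$ of order $p^{i}$ and $\bar b$ of order $p^{e}$ (recall $G'=\langle a^{p^{i}}\rangle$), the induced automorphism must preserve the order $p^{i}$ of $\bar a$; since $\bar a$ and $\bar b$ are independent this forces $p^{e-i}\mid\mu$, say $\mu=mp^{e-i}$. Reducing further modulo $\Phi(G)=\langle a^{p},b^{p}\rangle$ and invoking the Burnside basis theorem, $\{\theta(a),\theta(b)\}$ generates $G$ iff the associated $2\times2$ matrix over $\F_{p}$ is invertible, which (as $b^{mp^{e-i}}\in\Phi(G)$) amounts to $p\nmid n$ and $p\nmid\delta$. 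The remaining constraint comes from the defining relation, which $\theta$ must respect: $[\theta(a),\theta(b)]=\theta(a)^{p^{i}}$. Because $p^{e-i}\mid\mu$ we have $c^{\mu}\equiv1\pmod{p^{e}}$, so the power formula collapses to $\theta(a)^{p^{i}}=a^{np^{i}}$, while the commutator formula gives $[\theta(a),\theta(b)]=a^{n(c^{\delta}-1)}$, the $s$-term dropping out since $c^{\mu}=1$. Cancelling the unit $n$ yields $(1+p^{i})^{\delta-1}\equiv1\pmod{p^{e}}$, and since $c$ has order $p^{e-i}$ this is exactly $\delta\equiv1\pmod{p^{e-i}}$, i.e.\ $\delta=1+rp^{e-i}$, as asserted.

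For sufficiency I take $\theta(a)=b^{mp^{e-i}}a^{n}$ and $\theta(b)=b^{1+rp^{e-i}}a^{s}$ with $p\nmid n$ and check that the three defining relations of $G$ hold for these images; by the defining property of the presentation this yields an endomorphism, and the Frattini computation above then makes it surjective, hence an automorphism of the finite group $G$. The relation $\theta(a)^{p^{e}}=1$ follows from $\theta(a)^{p^{i}}=a^{np^{i}}$ (which has order $p^{e-i}$); the relation $[\theta(a),\theta(b)]=\theta(a)^{p^{i}}$ is the computation just performed, read backwards, using $\delta\equiv1\pmod{p^{e-i}}$; and $\theta(b)^{p^{e}}=1$ reduces, via the power formula, to $\sum_{t=0}^{p^{e}-1}c^{\delta t}\equiv0\pmod{p^{e}}$, which holds because $c^{\delta}\equiv c\pmod{p^{e}}$ gives $c^{\delta t}\equiv c^{t}$ termwise, whence the sum equals $(c^{p^{e}}-1)/(c-1)$ with $v_p(c^{p^{e}}-1)=i+e$ and $v_p(c-1)=i$. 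The main obstacle is precisely this valuation bookkeeping: each relation hinges on computing $v_p\big((1+p^{i})^{k}-1\big)$ for the several exponents $k$ that arise and on keeping the noncommutative normal-form multiplication straight, all congruences being read modulo $p^{e}$ in the exponent of $a$. Everything else is routine once the multiplication, power and commutator formulas are in hand.
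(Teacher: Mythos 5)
Your proof is correct, but it takes a genuinely different route from the paper. The paper disposes of this proposition in three lines by quoting Menegazzo's classification of automorphisms of $p$-groups with cyclic commutator subgroup (\cite[page 82]{men}), which already gives the shape $\theta(a)=b^{\alpha}a^{\beta}$, $\theta(b)=b^{1+\lambda p^{e-i}}(b^{\alpha}a^{\beta})^{\mu}$ with $p^{e-i}\mid\alpha$ and $p\nmid\beta$; the only work done there is the observation $b^{p^{e-i}}\in Z(G)$, which lets one expand $(b^{\alpha}a^{\beta})^{\mu}$ and reparametrize. You instead prove the statement from scratch: the normal form $b^{j}a^{l}$ in the split extension $\langle a\rangle\rtimes\langle b\rangle$, the multiplication/power/commutator formulas, the valuation identity $v_p\bigl((1+p^{i})^{k}-1\bigr)=i+v_p(k)$ for odd $p$, the abelianization $G/G'\cong C_{p^i}\times C_{p^e}$ to force $p^{e-i}\mid\mu$, the Burnside basis theorem for $p\nmid n$, and the relation $[\theta(a),\theta(b)]=\theta(a)^{p^i}$ to pin down $\delta\equiv1\pmod{p^{e-i}}$; sufficiency then follows from von Dyck plus surjectivity modulo the Frattini subgroup. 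I checked the individual computations (the commutator formula, the collapse of the power formula when $c^{\mu}\equiv1$, and the valuation count showing $\theta(b)^{p^e}=1$ imposes no condition on $s$) and they are all sound. What your approach buys is self-containedness and transparency about exactly which relation forces which congruence; what it costs is length, and it silently re-derives a special case of Menegazzo's theorem that the paper is content to cite.
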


\begin{proof}
By the result of Menegazzo in \cite[page 82]{men}, a map $\theta$ defined on the generators $a$ and $b$ extends to an automorphism of $G$ if and only if
\[
\theta(a)= b^{\alpha}a^{\beta} \ \ \ \
\text{and} \ \ \ \
\theta(b)= b^{1+\lambda p^{e-i}}(b^{\alpha}a^{\beta})^{\mu},
\]
where $p^{e-i} \mid \alpha$ and $p\nmid \beta$.
As one can easily check that $b^{p^{e-i}}\in Z(G)$, we have $(b^{\alpha}a^{\beta})^{\mu}=b^{\alpha\mu}a^{\beta\mu}$.
Then by writing $\alpha=mp^{e-i}$, $\beta=n$, $\lambda+m\mu=r$ and $\beta\mu=s$, we get the result.
 \end{proof}

We next consider Beauville $p$-groups of nilpotency class $2$ which are  not as in \eqref{class2-case1}.
By \cref{class 2-types}, there is only one case that we have to deal with since in the other case the groups are powerful.

\begin{pro}
\label{automorphism-class 2}
 Let $p$ be an odd prime and let $G$ be a $p$-group given by the
 following presentation:
\[
G=\langle a,b \mid a^{p^e}=[b,a]^{p^j}=[b,a,a]=[b,a,b]=1, b^{p^i}=[b,a]^{p^k} \rangle,
\]
where $0<k<j\leq i\leq e$ and $e= i+j-k$.
Then a map $\theta$ defined on the generators $a$ and $b$ extends to an automorphism of $G$ if and only if
\[
\theta(a)=a^{1+mp^{e-i}}b^nc_a \ \ \ \
\text{and} \ \ \ \
\theta(b)= a^{rp^{e-i}}b^sc_b,
\]
where  $c_a,c_b \in G'$ and $1\leq m,n,r,s\leq p^i$ with $p \nmid s$.
\end{pro}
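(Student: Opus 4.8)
The plan is to determine exactly which maps $\theta$ on the generators extend to endomorphisms of $G$, and then single out the automorphisms among them. Throughout I would use the normal form in which every element of $G$ is written uniquely as $a^u b^v [b,a]^w$ with $0\le u<p^e$, $0\le v<p^i$ and $0\le w<p^j$; this is legitimate because the presentation defines a group of order $p^{e+i+j}$ (so $|G'|=p^j$ and $\langle a\rangle\cap G'=1$), by \cref{presentation general}. Writing $\theta(a)=a^{x_1}b^{y_1}[b,a]^{z_1}$ and $\theta(b)=a^{x_2}b^{y_2}[b,a]^{z_2}$, the exponents $z_1,z_2$ turn out to be completely unconstrained, which is exactly the freedom recorded by the central factors $c_a,c_b\in G'$; so the whole problem reduces to pinning down $x_1,y_1,x_2,y_2$.

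First I would impose that the four defining relations survive under $\theta$. Because $G$ has class $2$ and $\exp G'=p^j$, the relations $[b,a]^{p^j}=[b,a,a]=[b,a,b]=1$ are preserved automatically, so only $a^{p^e}=1$ and $b^{p^i}=[b,a]^{p^k}$ must be checked. Both computations rest on the class-$2$ power rule $(gh)^m=g^m h^m [h,g]^{\binom m2}$ together with bilinearity of the commutator. Here the hypothesis that $p$ is odd kills every binomial-coefficient correction term (since $p^j\mid\binom{p^i}2$ and $p^j\mid\binom{p^e}2$), and the arithmetic identity $e=i+j-k$, equivalently $p^{e-i}=p^{j-k}$, makes the surviving powers collapse. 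A direct evaluation shows $\theta(a)^{p^e}=1$ holds unconditionally, while the relation for $b$ reduces, after comparing the $\langle a\rangle$-part and the $G'$-part separately, to the two congruences $p^{e-i}\mid x_2$ and $y_2(x_1-1)\equiv 0\pmod{p^{j-k}}$. The first of these already forces $x_2=rp^{e-i}$.

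Next I would impose surjectivity. Since $\theta$ is an endomorphism of a finite $p$-group, it is an automorphism if and only if it is invertible modulo $\Phi(G)=G'G^p$. As $p^{e-i}\mid x_2$ with $e-i\ge1$, the induced map on $G/\Phi(G)\cong C_p\times C_p$ is lower triangular in the basis $\{a,b\}$, with determinant $\equiv x_1y_2\pmod p$; hence the automorphism condition is precisely $p\nmid x_1$ and $p\nmid y_2$. Combining $p\nmid y_2$ with $y_2(x_1-1)\equiv 0\pmod{p^{j-k}}$ lets me cancel the unit $y_2$ and conclude $x_1\equiv 1\pmod{p^{e-i}}$, i.e.\ $x_1=1+mp^{e-i}$, which makes $p\nmid x_1$ automatic. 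Renaming $y_1=n$ and $y_2=s$ yields the asserted shape, the ranges $1\le m,n,r,s\le p^i$ coming from reducing the exponents modulo $p^i$ and with $p\nmid s$. For the converse I would substitute a map of this form back into the two relations to confirm it is well defined, and read off the determinant $x_1y_2\equiv s\not\equiv0$ to confirm bijectivity.

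The main obstacle, and the only place real care is needed, is the bookkeeping in the second step: one must track which terms in the expansions of $\theta(a)^{p^e}$ and $\theta(b)^{p^i}$ genuinely vanish. The crux is that the asymmetry between the two generators—that $\theta(b)$ is forced to have $a$-exponent divisible by $p^{e-i}$ while $\theta(a)$ has $a$-exponent $\equiv1\pmod{p^{e-i}}$—is dictated entirely by the relation $b^{p^i}=[b,a]^{p^k}$ through the single congruence $y_2(x_1-1)\equiv0\pmod{p^{j-k}}$; once this is isolated the rest is formal. Since $G'$ is cyclic, one could alternatively obtain the same description by specialising Menegazzo's analysis of $p$-groups with cyclic commutator subgroup in \cite{men}, exactly as in \cref{automorphism-powerful}.
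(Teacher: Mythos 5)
Your argument is correct, but it takes a genuinely different route from the paper. The paper's proof is essentially a citation: it invokes Menegazzo's description (\cite{men}, page 85) of the automorphisms of $p$-groups with cyclic commutator subgroup, namely $\theta(a)=a^{1+\lambda p^{e-i}}c$ with $c\in\langle a^{\alpha}b^{\beta}\rangle G'$ and $\theta(b)=a^{\alpha}b^{\beta}[b,a]^{\mu}$ with $p^{e-i}\mid\alpha$, $p\nmid\beta$, and then merely translates these parameters into the stated form, exactly as it does for \cref{automorphism-powerful}. You instead work directly from the presentation: von Dyck's theorem reduces extendability to preservation of the relations, the class-$2$ power identity (with $p$ odd killing the binomial corrections and $e-i=j-k$ collapsing the surviving powers) turns the relation $b^{p^i}=[b,a]^{p^k}$ into the congruences $p^{e-i}\mid x_2$ and $y_2(x_1-1)-x_2y_1\equiv 0\pmod{p^{j-k}}$, and the Burnside basis theorem plus the triangular shape of the induced map on $G/\Phi(G)$ forces $p\nmid x_1y_2$, whence $x_1\equiv 1\pmod{p^{e-i}}$. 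The one point you pass over silently is the cross term $x_2y_1$ in the second congruence; it is harmless precisely because $p^{j-k}=p^{e-i}$ already divides $x_2$, but this deserves a line. Your approach is longer yet self-contained and shows exactly which relation produces which constraint, whereas the paper's route outsources the whole computation to \cite{men}; your closing remark that one could instead specialise Menegazzo's analysis is in fact precisely what the paper does.
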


\begin{proof}
By the result of Menegazzo in \cite[page 85]{men}, a map $\theta$ defined on the generators $a$ and $b$ extends to an automorphism of $G$ if and only if
\[
\theta(a)= a^{1+\lambda p^{e-i}}c \ \ \ \
\text{and} \ \ \ \
\theta(b)= a^{\alpha}b^{\beta}[b,a]^{\mu},
\]
where $p^{e-i} \mid \alpha$, $p\nmid \beta$ and $c\in \langle a^{\alpha}b^{\beta}\rangle G'$.

Now if we write $\alpha=rp^{e-i}$, and $\beta=s$, then 
\[
\theta(b)= a^{rp^{e-i}}b^sc_b,
\]
where $1\leq r,s \leq p^i$ with $p\nmid s$, and $c_b\in G'$.

On the other hand, observe that  all elements in $ \langle a^{\alpha}b^{\beta}\rangle G'$ are of the form
\[
a^{r\ell p^{e-i}}b^{s\ell}c_a,
\]
where $1\leq r\ell, s\ell \leq p^i$ and $c_a\in G'$.
As a consequence, 
\[
\theta(a)=a^{1+mp^{e-i}}b^nc_a,
\]
where $1\leq m, n \leq p^i$, and $c_a\in G'$, as desired.
\end{proof}

\section{Proof of the main theorems}
\label{sec3}

This section is devoted to proving our main theorems.
We will first deal with Theorem A.
Indeed, Theorem A immediately follows from the following result.

\begin{thm}
Let $p$ be an odd prime and let $G$ be a $2$-generator $p$-group of exponent $p^e$ with one of the following presentations:
\begin{enumerate}
\item 
$G=\langle a,b \mid a^{p^e}=b^{p^e}=1, [a,b]=a^{p^i}\rangle$,
where $1 \leq i \leq e-1$,
\item 
$
G=\langle a,b \mid a^{p^e}=[b,a]^{p^j}=[b,a,a]=[b,a,b]=1, b^{p^i}=[b,a]^{p^k} \rangle,
$
where $0<k<j\leq i\leq e$ and $e= i+j-k$.
\end{enumerate}
Let $\{x , y\}$ be an arbitrary generating set of $G$.
Then there do not exist $\theta \in \Aut(G)$ and $g\in G$ such that
\[
\theta(x)=(x^{-1})^{g} \ \
\text{and}
\ \
\theta(y)=(y^{-1})^{g}.
\]
\end{thm}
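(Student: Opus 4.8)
The plan is to argue by contradiction and to push the whole condition down to the Frattini quotient, where it becomes rigid enough to contradict the explicit shape of $\Aut(G)$.

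First I would assume such $\theta\in\Aut(G)$ and $g\in G$ exist, so that $\theta(x)=(x^{-1})^{g}$ and $\theta(y)=(y^{-1})^{g}$, and pass to $G/\Phi(G)\cong\F_p^2$, writing $\bar g$ for the image of $g$. Since $G/\Phi(G)$ is abelian, conjugation becomes trivial, so the induced map $\bar\theta\in\Aut(G/\Phi(G))=\GL_2(\F_p)$ satisfies $\bar\theta(\bar x)=\bar x^{-1}$ and $\bar\theta(\bar y)=\bar y^{-1}$. As $\{x,y\}$ generates $G$, the pair $\{\bar x,\bar y\}$ is an $\F_p$-basis of $G/\Phi(G)$, and since $\bar\theta$ is linear this forces $\bar\theta=-\Id$. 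The point of this reduction is twofold: the scalar $-\Id$ does not depend on the chosen basis, so it disposes of the quantifier over all generating sets at once; and the common conjugator $g$ plays no role, since it disappears upon abelianising. Thus the whole statement reduces to the single assertion that \emph{no} automorphism of $G$ induces $-\Id$ on $G/\Phi(G)$.

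To establish this I would use the explicit descriptions of $\Aut(G)$ obtained above. In case (i), by \cref{automorphism-powerful} every automorphism sends $a\mapsto b^{mp^{e-i}}a^n$ and $b\mapsto b^{1+rp^{e-i}}a^s$ with $p\nmid n$. Since $1\le i\le e-1$ gives $e-i\ge1$, the factors $b^{mp^{e-i}}$ and $b^{rp^{e-i}}$ lie in $G^p\le\Phi(G)$, so in the basis $(\bar a,\bar b)$ the matrix of $\bar\theta$ is $\bigl(\begin{smallmatrix} n & s \\ 0 & 1\end{smallmatrix}\bigr)$, whose $(2,2)$-entry equals $1$. In case (ii), by \cref{automorphism-class 2} every automorphism sends $a\mapsto a^{1+mp^{e-i}}b^nc_a$ and $b\mapsto a^{rp^{e-i}}b^sc_b$ with $c_a,c_b\in G'$ and $p\nmid s$; here $e-i=j-k\ge1$, so both the $p^{e-i}$-power terms and the commutator factors $c_a,c_b\in G'\le\Phi(G)$ vanish modulo $\Phi(G)$, and the matrix of $\bar\theta$ is $\bigl(\begin{smallmatrix} 1 & 0 \\ n & s\end{smallmatrix}\bigr)$, whose $(1,1)$-entry equals $1$. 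In either case one diagonal entry is pinned to $1$, which cannot equal $-1$ because $p$ is odd; hence $\bar\theta\ne-\Id$, contradicting the previous paragraph.

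The conceptual heart of the argument is the asymmetry built into the two presentations: one of the two generators is rigid modulo $\Phi(G)$, in the sense that every automorphism fixes its image up to the characteristic line spanned by the other generator, which pins one eigenvalue of $\bar\theta$ to $1$. I expect the only real care to be needed in two routine but essential checks: verifying that $e-i\ge1$ in both cases (so that the $b^{mp^{e-i}}$- and $a^{rp^{e-i}}$-type terms, together with the $G'$-contributions, genuinely lie in $\Phi(G)$ and drop out of the induced matrix), and recording that $1\ne-1$ in $\F_p$, which is exactly where the hypothesis that $p$ is odd enters. Once the matrices of $\bar\theta$ are written down, the contradiction is immediate.
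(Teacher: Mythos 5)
Your proof is correct and follows essentially the same route as the paper: both arguments pass to the Frattini quotient and exploit the explicit automorphism descriptions of \cref{automorphism-powerful} and \cref{automorphism-class 2} to see that one generator's image is fixed modulo $\Phi(G)$ up to the line spanned by the other. The only difference is packaging --- you aggregate the two conditions into the basis-independent statement $\bar\theta=-\Id$ and contradict the matrix shape, whereas the paper deduces $p\mid u$ and $p\mid w$ coordinatewise and contradicts the fact that $\{x,y\}$ generates $G$; these are the same computation.
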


\begin{proof}
Suppose, by way of contradiction, that there exist $\theta \in \Aut(G)$ and $g \in G$ such that $\theta(x)=(x^{-1})^{g}$ and $\theta(y)=(y^{-1})^{g}$. 
We next write
$x=a^tb^uc$ and $y=a^vb^wd$ where $c, d \in \Phi(G)$ and $t, u, v, w \in \Z$.

Now we have
\begin{equation}
\label{condition failed}
\theta(x)=\theta(a^tb^uc)=\theta(a)^t\theta(b)^u\theta(c)=(c^{-1}b^{-u}a^{-t})^g.
\end{equation}
We first assume that we are in  case (i). 
According to \cref{automorphism-powerful}, we have $\theta(x)\equiv a^{tn+us}b^u \pmod{\Phi(G)}$ for some $n, s \in \Z$.
Then \eqref{condition failed} implies that
\[
a^{tn+us}b^u \equiv a^{-t}b^{-u} \pmod{\Phi(G)},
\]
that is, $a^{t(n+1)+us} \equiv b^{-2u} \pmod{\Phi(G)}$. 
Since $p$ is odd, this implies that $p \mid u$.
By using the same arguments for $y$, we also get $p \mid w$.
Thus, $G=\langle x, y\rangle \leq \langle a, \Phi(G)\rangle$, a contradiction.

Similarly, if we are in case (ii), then by using \cref{automorphism-class 2}, we get $p\mid t$ and $p\mid v$. 
Consequently, in this case we have $G=\langle x, y\rangle \leq \langle b, \Phi(G) \rangle$, which is again a contradiction.
\end{proof}

The next result, which answers Question 18 in \cite{fai4}, is a straightforward consequence of Theorem A.

\begin{cor}
There exist infinitely many nilpotent purely non-strongly real Beauville groups.
\end{cor}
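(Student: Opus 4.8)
The plan is to deduce the corollary directly from Theorem A by exhibiting an explicit infinite family of nilpotent groups that are Beauville and purely non-strongly real. Since a $p$-group is nilpotent, any $p$-group falling under the hypotheses of Theorem A already furnishes a nilpotent purely non-strongly real Beauville group; so the first thing I would do is pin down a concrete infinite family from either \eqref{powerful beauville} or \eqref{class 2 beauville} and verify it consists of \emph{bona fide} Beauville groups, not merely candidates to which Theorem A vacuously applies.

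For concreteness I would fix a prime $p\geq 5$ and take the metacyclic family from \cref{presentation-metacyclic}, namely
\[
G_e=\langle a,b \mid a^{p^e}=b^{p^e}=1,\ [a,b]=a^{p^{e-1}}\rangle
\]
for $e\geq 2$, which gives infinitely many non-isomorphic groups as $e$ varies (their orders $p^{2e}$ are distinct). By \cref{presentation-metacyclic}, each $G_e$ is genuinely a Beauville $p$-group, so each admits at least one Beauville structure. Theorem A then asserts that \emph{every} such structure fails to be strongly real, i.e.\ each $G_e$ is purely non-strongly real. Each $G_e$ is a finite $p$-group, hence nilpotent. This produces infinitely many nilpotent purely non-strongly real Beauville groups, which is exactly the claim.

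The only real point requiring care is ensuring that the family is genuinely infinite and that membership in the Beauville class is not in doubt; both are handled by \cref{presentation-metacyclic}, which guarantees that non-abelian metacyclic Beauville $p$-groups exist for every $p\geq 5$ and have precisely the presentation used. The step I would expect to flag, rather than a computational obstacle, is the purely logical one: Theorem A shows \emph{no} generating set $\{x,y\}$ admits the required $\theta$ and $g$, so in particular no Beauville structure does; since a non-abelian metacyclic Beauville $p$-group \emph{has} a Beauville structure, the ``purely non-strongly real'' conclusion is not vacuous. I would state this explicitly to close the gap between ``no structure is strongly real'' and ``the group is a purely non-strongly real Beauville group.'' No delicate estimate or lengthy calculation is involved; the corollary is, as claimed, a straightforward consequence of Theorem A.
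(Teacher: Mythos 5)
Your proposal is correct and matches the paper, which offers no explicit proof beyond calling the corollary a straightforward consequence of Theorem A; you simply spell out the implicit deduction (fix $p\geq 5$, vary $e$ to get infinitely many non-isomorphic metacyclic Beauville $p$-groups, note $p$-groups are nilpotent, and apply Theorem A). Your care in checking that the groups genuinely admit Beauville structures --- so the conclusion is not vacuous --- is a worthwhile point the paper leaves to the reader, justified by the ``if and only if'' of Corollary 2.9 of \cite{FG} cited in the proof of \cref{presentation-metacyclic}.
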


In the remaining part of this section, we will prove Theorem B.
Before we proceed, we will need to introduce a general lemma.

\begin{lem}
\label{reducing case distinctions}
Let $G$ be a group and let $x,y \in G$. 
If $\theta\in \Aut(G)$, then the following are equivalent.
\begin{enumerate}
\item 
There exists $g\in G$ such that $\theta(x)=(x^{-1})^g$ and $\theta(y)=(y^{-1})^g$.
\item 
For $a=xy$ and $b\in \{x,y\}$, there exists $h\in G$ such that
$\theta(a)=(a^{-1})^h$ and $\theta(b)=(b^{-1})^h$.
\end{enumerate}
\end{lem}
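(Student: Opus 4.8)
The plan is to prove the two implications separately, exploiting the symmetry between the three elements $x$, $y$, and $xy=a$. The key observation is that the condition ``$\theta(u)=(u^{-1})^g$ for all $u$ in a set'' is, for a fixed $g$, preserved under taking products in a controlled way, so that knowing the condition for two of the three elements $x$, $y$, $xy$ forces it for the third. Concretely, if $\theta(x)=g x^{-1} g^{-1}$ and $\theta(y)=g y^{-1} g^{-1}$ for a common conjugator $g$, then since $\theta$ is a homomorphism we compute
\[
\theta(xy)=\theta(x)\theta(y)=g x^{-1} g^{-1}\, g y^{-1} g^{-1}=g\,(x^{-1}y^{-1})\,g^{-1}=g\,(yx)^{-1}\,g^{-1}.
\]
This is almost what we want, except that the natural inverse attached to $a=xy$ is $(xy)^{-1}=y^{-1}x^{-1}$, not $(yx)^{-1}=x^{-1}y^{-1}$. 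So the main technical point is to absorb the discrepancy between $yx$ and $xy$ into a modification of the conjugating element.

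First I would prove the implication (i)$\Rightarrow$(ii). Assume $g$ satisfies $\theta(x)=(x^{-1})^g$ and $\theta(y)=(y^{-1})^g$. Taking $b=y$ (the case $b=x$ is symmetric), I need to produce a single $h$ with $\theta(a)=(a^{-1})^h$ and $\theta(y)=(y^{-1})^h$. The idea is to set $h=gx^{-1}$ or a similar correction and verify both identities. For the $a$-identity, using the computation above,
\[
\theta(a)=g\,(yx)^{-1}\,g^{-1}=g x^{-1}\,(xy)^{-1}\,x\, g^{-1}=h\,a^{-1}\,h^{-1}
\]
with $h=gx^{-1}$, since $(yx)^{-1}=x^{-1}(xy)^{-1}x$. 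For the $y$-identity I then check that this same $h=gx^{-1}$ still conjugates $\theta(y)$ correctly; this requires that $x^{-1}$ commute with $\theta(y)$ in the appropriate sense, which needs a short verification and may force choosing the correction factor more carefully (e.g.\ conjugating by $x$ on the correct side, or picking $h$ to be one of $gx^{\pm1}$, $gy^{\pm1}$). The honest work is in selecting the right correcting element so that both equations hold simultaneously with one $h$.

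For the converse (ii)$\Rightarrow$(i), I would run exactly the same argument with the roles permuted: given $h$ realizing the pair $\{a,b\}$ with $a=xy$ and say $b=y$, recover $x=ay^{-1}$ and apply the identical product-and-correct computation to produce a conjugator $g$ for the pair $\{x,y\}$. Since the statement is a symmetric equivalence in the three elements $x$, $y$, $xy$, this direction is formally the mirror of the first, and no new ideas are needed.

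The main obstacle I anticipate is purely bookkeeping: the multiplicative condition is not symmetric in the two factors because $(xy)^{-1}\ne x^{-1}y^{-1}$ in a nonabelian group, and so the common conjugator must be adjusted by a factor (a power of $x$ or $y$) when passing between $\{x,y\}$ and $\{x,xy\}$ or $\{y,xy\}$. I expect the bulk of the proof to consist of checking that a single correcting factor can be chosen that makes both required conjugacy identities hold at once, and that the argument is genuinely reversible so that the equivalence holds in both directions. No deep structural input is needed; the lemma is a formal consequence of $\theta$ being a homomorphism together with the elementary identity $(yx)^{-1}=x^{-1}(xy)^{-1}x$.
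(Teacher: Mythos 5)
Your approach is the same as the paper's: use that $\theta$ is a homomorphism to get $\theta(xy)$ conjugate to $x^{-1}y^{-1}=(yx)^{-1}$, then absorb the discrepancy between $(yx)^{-1}$ and $(xy)^{-1}$ into the conjugator. The one step you deferred is the crux, and the pairing you wrote down is the wrong one: in your (left-conjugation) convention, $h=gx^{-1}$ works for $b=x$, not for $b=y$. Indeed $(gx^{-1})\,x^{-1}\,(gx^{-1})^{-1}=gx^{-1}g^{-1}=\theta(x)$ because $x^{-1}$ commutes with itself, whereas requiring $(gx^{-1})\,y^{-1}\,(gx^{-1})^{-1}=\theta(y)=gy^{-1}g^{-1}$ would force $[x,y]=1$. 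For $b=y$ you must instead use the second expression $(yx)^{-1}=y\,(xy)^{-1}\,y^{-1}$, which gives $h=gy$, and then $(gy)\,y^{-1}\,(gy)^{-1}=gy^{-1}g^{-1}=\theta(y)$ as required. The pair of identities $x^{-1}y^{-1}=x^{-1}(xy)^{-1}x=y\,(xy)^{-1}y^{-1}$ is exactly what the paper records as $a^{-1}=(x^{-1}y^{-1})^{y}=(x^{-1}y^{-1})^{x^{-1}}$ (in right-conjugation notation), yielding $h=xg$ for $b=x$ and $h=y^{-1}g$ for $b=y$. The converse then goes through as you say: from $\theta(a)=(a^{-1})^h$ and $\theta(x)=(x^{-1})^h$ one computes $\theta(y)=\theta(x)^{-1}\theta(a)$ and reads off the adjusted conjugator $g$, so no new idea is needed there. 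With that one correction made explicit, your argument coincides with the paper's proof.
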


\begin{proof}
We first assume that (i) holds.
Since $\theta \in \Aut(G)$, we have 
\[
\theta(a)=\theta(x)\theta(y)=(x^{-1}y^{-1})^g.
\]
Note that $a^{-1}=(x^{-1}y^{-1})^{y}=(x^{-1}y^{-1})^{x^{-1}}$.
Hence $\theta(a)=(a^{-1})^{y^{-1}g}=(a^{-1})^{xg}$.
Then (ii) follows by the fact that $\theta(x)=(x^{-1})^{xg}$ and $\theta(y)=(y^{-1})^{y^{-1}g}$.

Conversely, assume that (ii) holds.
Since we can apply a similar argument we assume that $b=x$.
Then the equalities $\theta(a)=\theta(xy)=(y^{-1}x^{-1})^h$ and $\theta(x)=(x^{-1})^h$ imply that $\theta(y)=(y^{-1})^{x^{-1}h}$.
Thus, we take $g=x^{-1}h$.
\end{proof}

For $e\geq2$, let $T$ be the triangle group defined by the presentation
\[
T=\langle a, b \mid a^{2^e}=b^{2^e}=(ab)^{2^{e}}=1\rangle.
\]
Then by Theorem 2.1(iii) in \cite{gul2}, the group $G$ given by the following presentation
\begin{multline}
\label{2-group-presentation}
G=
\langle x,y,z,t,w \mid x^{2^e}=y^{2^e}=z^{2^{e-1}}=t^{2^{e-1}}=w^{2^{e-1}}=1,
\\
[y,x]=z, [z,x]=t, [z,y]=w \rangle
\end{multline}
is isomorphic to $T/\gamma_4(T)$.

Note that the map $\phi \colon T \longrightarrow T$ defined by $\phi(a)=a^{-1}$ and $\phi(b)=b^{-1}$ is an automorphism. 
Since $G\cong T/\gamma_4(T)$, the map $\phi$ induces an automorphism $\theta \colon G\longrightarrow G$ defined by $\theta(x)=x^{-1}$ and $\theta(y)=y^{-1}$.
Then observe that we have $\theta(z)=zt^{-1}w^{-1}$, \ $\theta(t)=t^{-1}$ and $\theta(w)=w^{-1}$.

\vspace{10pt}

In the calculations in \cref{candidates for g } and \cref{second main theorem}, we will use the following identity.
Let $G$ be a group as in \eqref{2-group-presentation}. 
Then since $G$ is of class $3$, we have
\[
[y^m, x^n]=[y^m, x]^n[y^m, x, x]^{\binom{n}{2}},
\]
where $[y^m,x]=z^mw^{\binom{m}{2}}$. 
As a consequence,
\[
[y^m, x^n]=z^{mn}t^{m\binom{n}{2}}w^{n\binom{m}{2}}.
\]

The following technical lemma will be needed for the proof of \cref{second main theorem}.

\begin{lem}
\label{candidates for g }
Let $G$ be a group as in \eqref{2-group-presentation}, and let $\theta \in \Aut(G)$ be the automorphism of $G$ defined by $\theta(x)=x^{-1}$ and $\theta(y)=y^{-1}$.
Then the following hold:
\begin{enumerate}
\item
If $a=x^iy^jz^k \in G$ with $1\leq i,j \leq 2^e$, $1\leq k \leq 2^{e-1}$, and $i$ is odd, then
\[
a\theta(a)=[a^{-1}, y^{2kn-j}],
\]
where $1\leq n <2^e$ and $i\cdot n\equiv 1 \pmod{2^e}$.
\item 
If $b=y^jx^iz^k \in G$ with $1\leq i,j \leq 2^e$, $1\leq k \leq 2^{e-1}$, and $j$ is odd, then
\[
b\theta(b)=[b^{-1}, x^{-2km-i}],
\]
where $1\leq m <2^e$ and $j \cdot m\equiv 1 \pmod{2^e}$.
\end{enumerate}
\end{lem}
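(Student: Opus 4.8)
The plan is to prove the two parts by direct computation, using the commutator identity displayed just before the lemma together with the explicit values $\theta(x)=x^{-1}$, $\theta(y)=y^{-1}$, $\theta(z)=zt^{-1}w^{-1}$, $\theta(t)=t^{-1}$, $\theta(w)=w^{-1}$ already recorded in the excerpt. The two statements are symmetric in the roles of $x$ and $y$, so I expect to work out part (i) in full and then obtain part (ii) by the same argument with $x$ and $y$ interchanged (watching the sign that enters because $[z,x]=t$ while $[z,y]=w$, so the exponents of $t$ and $w$ swap and a sign change appears in the final commutator). Thus the core of the work is part (i).

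\textbf{Computing $a\theta(a)$ in part (i).} First I would apply $\theta$ termwise to $a=x^iy^jz^k$, obtaining $\theta(a)=x^{-i}y^{-j}(zt^{-1}w^{-1})^k$. Since $t,w$ are central (they lie in $\gamma_3(G)=Z(G)$, as $[z,x]=t$, $[z,y]=w$ generate the last nontrivial term), I can write $(zt^{-1}w^{-1})^k=z^k t^{-k}w^{-k}\cdot(\text{correction from }z^k)$; more carefully, expanding $(zt^{-1}w^{-1})^k$ requires only that $z$ commute with $t,w$, which holds, so $\theta(a)=x^{-i}y^{-j}z^k t^{-k}w^{-k}$. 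Then $a\theta(a)=x^iy^jz^k\,x^{-i}y^{-j}z^k t^{-k}w^{-k}$. The plan is to push the central factors $z^k,t^{-k},w^{-k}$ aside and reduce $x^iy^j x^{-i}y^{-j}$ using the commutator identity. Because $z^k$ is not central, collecting it will produce further $t$- and $w$-factors via $[z^k,x^{-i}]$ and $[z^k,y^{-j}]$, and the identity $[y^m,x^n]=z^{mn}t^{m\binom n2}w^{n\binom m2}$ (and its analogue for $z$ against $x,y$) will let me accumulate all resulting powers of $z,t,w$. The target is to show everything collapses to a single commutator of the form $[a^{-1},y^{s}]$.

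\textbf{Identifying the exponent and the main obstacle.} The crux is to show the product equals $[a^{-1},y^{2kn-j}]$, where $n$ is the inverse of $i$ modulo $2^e$ (this uses that $i$ is odd, hence a unit mod $2^e$). I would compute $[a^{-1},y^{s}]$ explicitly for a general $s$: writing $a^{-1}\equiv x^{-i}y^{-j}\pmod{\gamma_2}$ and expanding $[a^{-1},y^s]$ via the class-$3$ commutator calculus, the leading term is $[x^{-i},y^s]=z^{-is}t^{(\ldots)}w^{(\ldots)}$ up to the central corrections. Matching the $z$-exponent forces $-is\equiv 2k\pmod{2^e}$ after accounting for the $z^{2k}$ coming from $a\theta(a)$, i.e. $s\equiv -2kn\pmod{2^e}$; reconciling this with the claimed $2kn-j$ is where the $y^{-j}$ tail of $a^{-1}$ and the asymmetry between $t$ and $w$ enter. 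The main obstacle I anticipate is bookkeeping the binomial correction terms $\binom{n}{2}$, $\binom{m}{2}$ modulo $2^{e-1}$ (the order of $t$ and $w$): these parity-sensitive factors are exactly where a naive computation can lose a sign or a factor of $2$, and getting the $t$- and $w$-exponents to vanish simultaneously is the delicate point. Once the $z$-exponent is pinned down by the modular inverse condition $in\equiv1$ and the central exponents are checked to agree mod $2^{e-1}$, the equality $a\theta(a)=[a^{-1},y^{2kn-j}]$ follows, and part (ii) is then immediate by the $x\leftrightarrow y$ symmetry described above.
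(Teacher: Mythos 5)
Your overall strategy is the same as the paper's: both parts are proved by direct computation in the class-$3$ group using the displayed identity $[y^m,x^n]=z^{mn}t^{m\binom{n}{2}}w^{n\binom{m}{2}}$, with $\theta(a)=x^{-i}y^{-j}z^kt^{-k}w^{-k}$ exactly as you say, and part (ii) deduced from part (i) by interchanging $x$ and $y$ (which replaces $z=[y,x]$ by $z^{-1}$, hence $k$ by $-k$; this is where the sign change you anticipate comes from, and it matches the paper). The paper organizes the verification more economically than a full normal-form expansion: it splits $[a^{-1},y^{2kn-j}]=[a^{-1},y^{-j}]\,[a^{-1},y^{2kn}]^{y^{-j}}$, computes $[a^{-1},y^{2kn}]=z^{2k}t^{-ik-k}w^{-k}$ using $in\equiv 1 \pmod{2^e}$, and then shows that $a\theta(a)$ collapses to the same expression $[a^{-1},y^{-j}]z^{2k}t^{-ik-k}w^{-2jk-k}$, so the factor $[a^{-1},y^{-j}]$ never has to be expanded. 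Your plan of reducing both sides to normal form would also work, at the cost of tracking more binomial corrections.

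The concrete problem is a sign error in the one piece of the computation you actually wrote down. With $[y,x]=z$ one has $[x^{-i},y^{s}]\equiv z^{is}\pmod{\gamma_3(G)}$, not $z^{-is}$; moreover the $z$-exponent of $a\theta(a)$ modulo $\gamma_3(G)$ is $2k-ij$ (the $z^{-ij}$ coming from $x^iy^jx^{-i}y^{-j}$), so the matching condition is $is\equiv 2k-ij$, which gives $s\equiv 2kn-j$, precisely the claimed exponent. Your version, $-is\equiv 2k$ hence $s\equiv -2kn$, is inconsistent with the target $2kn-j$, and the ``$y^{-j}$ tail'' can only account for the additive $-j$, not for flipping the sign of $2kn$; followed literally, the plan lands on the wrong exponent. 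Beyond that, the genuinely delicate part --- checking that the $t$- and $w$-exponents agree modulo $2^{e-1}$, which you correctly single out as the main obstacle --- is announced but never carried out, so as it stands the proposal is a (repairable) plan rather than a proof.
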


\begin{proof}
We will first prove (i).
We have
\[
[a^{-1}, y^{2kn-j}]=[a^{-1}, y^{-j}][a^{-1}, y^{2kn}]^{y^{-j}}.
\]
By keeping in mind that $i \cdot n\equiv 1 \pmod{2^e}$, we have
\[
[a^{-1}, y^{2kn}]=z^{2k}t^{-ik-k}w^{-k},
\]
and hence
\[
[a^{-1}, y^{2kn-j}]=[a^{-1}, y^{-j}]z^{2k}t^{-ik-k}w^{-2jk-k}.
\]
On the other hand, 
\begin{align*}
a\theta(a)&= ax^{-i}y^{-j}z^kt^{-k}w^{-k}\\
&= ay^{-j}x^{-i}[z^{-k}y^{-j}x^{-i},y^{-j}]z^kt^{-k}w^{-jk-k}\\
&=ay^{-j}x^{-i}[a^{-1}, y^{-j}]z^kt^{-k}w^{-jk-k}\\
&=[a^{-1}, y^{-j}]z^{2k}t^{-ik-k}w^{-2jk-k}.
\end{align*}
Consequently, (i) holds.

Notice that (ii) follows from (i). 
In item (i), we interchange the roles of $x$ and $y$. 
Then since $z=[y, x]$, we have to replace $z$ with $z^{-1}$ and hence we get (ii) from (i).
\end{proof}

We are now ready to prove Theorem B, which answers Question 13 in \cite{fai4}.

\begin{thm}
\label{second main theorem}
For every $e\geq 2$, the group
\begin{multline*}
G=
\langle x,y,z,t,w \mid x^{2^e}=y^{2^e}=z^{2^{e-1}}=t^{2^{e-1}}=w^{2^{e-1}}=1,
\\
[y,x]=z, [z,x]=t, [z,y]=w \rangle
\end{multline*}
is a purely strongly real Beauville $2$-group.
\end{thm}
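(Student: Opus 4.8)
The plan is to reduce the whole statement to a single uniform assertion about the canonical automorphism. We first note that $G$ is $2$-generated with $G/\Phi(G)\cong C_2\times C_2$, so it makes sense to speak of Beauville structures; that $G$ is actually a Beauville group I would verify by exhibiting one explicit structure $\{\{x_1,y_1\},\{x_2,y_2\}\}$ and checking $\Sigma(x_1,y_1)\cap\Sigma(x_2,y_2)=1$ through the normal form $x^ay^bz^ct^dw^f$ (comparing the involutions lying in the conjugates of the relevant cyclic subgroups), this being consistent with $G\cong T/\gamma_4(T)$. For the strong-reality part the key observation is that it suffices to prove the following uniform claim: for the canonical automorphism $\theta$ of \eqref{2-group-presentation} with $\theta(x)=x^{-1}$, $\theta(y)=y^{-1}$, \emph{every} generating pair $\{u,v\}$ of $G$ admits an element $g$ with $g\theta(u)g^{-1}=u^{-1}$ and $g\theta(v)g^{-1}=v^{-1}$. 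Indeed, if this holds then both pairs of any Beauville structure are simultaneously inverted by the \emph{same} $\theta$, so every structure is strongly real and $G$ is purely strongly real.

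To establish the uniform claim I would first strip away the central part. Writing any element as $x^ay^bz^ct^dw^f$, the generators $t,w$ are central and are inverted by $\theta$, so the factor $t^dw^f$ affects neither $\theta$ nor conjugation on it; hence the real condition for an element depends only on its $x^ay^bz^c$-part. Next, classifying $\{u,v\}$ by its image in $G/\Phi(G)$, exactly one of $u,v,uv$ is of $x$-type, one of $y$-type and one of $xy$-type. Using \cref{reducing case distinctions}, which shows the real condition is preserved when passing from $\{u,v\}$ to $\{uv,u\}$ or $\{uv,v\}$, I would replace the given pair by a convenient one containing the $xy$-type element, so that only the pairings ($xy$-type, $x$-type) and ($xy$-type, $y$-type) remain.

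\cref{candidates for g } now supplies, for each of the two elements, an explicit conjugator: a power of $y$ when the $x$-exponent is odd (case (i)) and a power of $x$ when the $y$-exponent is odd (case (ii)); the $xy$-type element, having both exponents odd, may be treated by either case. Rewriting the real condition as $u\theta(u)=[u^{-1},g]$, two elements share a conjugator precisely when the corresponding cosets of their centralizers meet, that is, when $h_vh_u^{-1}\in C_G(v)\,C_G(u)$, where $h_u,h_v$ are the conjugators from the lemma.

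The main obstacle is this last reconciliation. A naive attempt, handling both elements by the same case, fails: the two candidate conjugators then have opposite parity in their exponents (the $xy$-type contribution is odd, the pure-type contribution even), so they cannot be matched inside $\langle x\rangle$ or $\langle y\rangle$ alone. The resolution is to use the $xy$-type element's freedom to switch cases together with the largeness of $C_G(u)\,C_G(v)$: since $u,v$ generate $G$ one has $C_G(u)\cap C_G(v)=Z(G)$, whence $|C_G(u)\,C_G(v)|=|C_G(u)|\,|C_G(v)|/|Z(G)|$. I would compute the centralizers and $Z(G)$ explicitly by means of the class-$3$ commutator identity $[y^m,x^n]=z^{mn}t^{m\binom{n}{2}}w^{n\binom{m}{2}}$ recalled above, and then either verify directly that $h_vh_u^{-1}$ lies in $C_G(u)\,C_G(v)$ or, equivalently, solve the two simultaneous conjugation equations for a common $g=x^{\alpha}y^{\beta}$. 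The genuine labour is the bookkeeping of the $\gamma_3$-corrections (the $t,w$-terms) while solving the resulting congruences for $\alpha,\beta$; once a common $g$ is produced for each of the finitely many type-pairings, the uniform claim follows and, together with Beauville-ness, shows that $G$ is purely strongly real.
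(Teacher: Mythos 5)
Your overall architecture coincides with the paper's: fix the canonical automorphism $\theta$ with $\theta(x)=x^{-1}$, $\theta(y)=y^{-1}$, normalize the generating pair via \cref{reducing case distinctions}, extract candidate conjugators from \cref{candidates for g }, and reconcile them through the centralizers $C_G(u)=\langle u\rangle Z(G)$. The problem is that the decisive step is missing. After reducing to the condition $h_vh_u^{-1}\in C_G(v)C_G(u)$, you declare the rest to be ``bookkeeping'' and do not carry it out; but this is exactly where the mathematical content lies. Writing the condition as $u^{S}y^{\epsilon_1}\equiv v^{R}x^{\epsilon_2} \pmod{Z(G)}$ produces \emph{three} simultaneous congruences in the two unknowns $R,S$ --- two modulo $2^{e}$ from the $x$- and $y$-exponents and one modulo $2^{e-1}$ from the $z$-exponent (equations \eqref{powers of x}--\eqref{powers of z} of the paper) --- and their consistency is not automatic: the system is a priori overdetermined. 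The paper's proof hinges on the observation that the $z$-congruence is obtained by multiplying the other two and dividing by $2$, after which elimination leaves $R$ with an odd coefficient, so a solution exists. Nothing in your proposal substitutes for this. In particular the ``largeness'' of $C_G(u)C_G(v)$ cannot close the gap: since $C_G(u)\cap C_G(v)=Z(G)=\langle t,w\rangle$ and $\langle u\rangle\cap Z(G)=1$, one gets $|C_G(u)C_G(v)|=2^{3e-2}\cdot 2^{3e-2}/2^{2e-2}=2^{4e-2}<2^{5e-3}=|G|$ for every $e\geq 2$, so membership of the specific element $h_vh_u^{-1}$ genuinely has to be verified. Likewise the ansatz $g=x^{\alpha}y^{\beta}$ is strictly too restrictive: the set of valid $g$, when nonempty, is a single coset of $Z(G)=\langle t,w\rangle$ and therefore has a fixed $z$-exponent in normal form, which need not vanish; insisting on $g=x^{\alpha}y^{\beta}$ could render the system unsolvable even when a common conjugator exists (the paper's solution is $g=u^{S}y^{2k_1n-2j_1}$, which is not of that shape).

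A secondary point: your reduction to the pairings ($xy$-type, $x$-type) and ($xy$-type, $y$-type) is driven by a parity clash that does not occur in the natural normalization. The paper pairs the $x$-type element with the $y$-type element; these fall into the two \emph{different} cases of \cref{candidates for g } (one conjugator a power of $y$ with even exponent, the other a power of $x$ with even exponent), so there is no ``same-case'' obstruction to evade, and only one type-pairing needs to be treated instead of two. Your route is likely workable but multiplies the case analysis without gain, and in any event still terminates in the same unverified congruence system.
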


\begin{proof}
By the proof of Theorem 2.1(iii) in \cite{gul2}, $G$ can be constructed as the semidirect product of $\langle y\rangle  \ltimes A$ by $\langle x\rangle\cong C_{2^e}$, where $\langle y\rangle\cong C_{2^e}$ and $A=\langle z \rangle \times \langle t\rangle \times \langle w \rangle \cong C_{2^{e-1}}\times C_{2^{e-1}}\times C_{2^{e-1}}$.
Hence $\exp G'=2^{e-1}$.
Again by Theorem 2.1(iii)  in \cite{gul2}, we know that $\exp G=2^e$.
This implies that any two elements which generate $G$ satisfy the relations given in \eqref{2-group-presentation}.

Let $\{x_1, y_1\}$ and $\{ x_2, y_2\}$ form a Beauville sturcture for $G$.
Then by the previous paragraph, without loss of generality we may assume that $x_1=x$ and $y_1=y$.
Since the map $\theta \colon G\longrightarrow G$ defined by $\theta(x)=x^{-1}$ and $\theta(y)=y^{-1}$ is an automorphism, in order to complete the proof we need to show that there exists $h\in G$ such that $\theta(x_2)=(x_2^{-1})^h$ and $\theta(y_2)=(y_2^{-1})^h$.

Note that $G$ has $3$ maximal subgroups.
Hence, there are $a, b \in \{x_2, y_2, x_2y_2\}$ such that $a\in \langle x, \Phi(G)$ and $b\in \langle y, \Phi(G)\rangle$.
Then according to  \cref{reducing case distinctions}, it is enough to show that $\theta(a)=(a^{-1})^g$ and $\theta(b)=(b^{-1})^g$ for some $g\in G$.

Indeed, we will prove more generally that for any 
$u \in \langle x, \Phi(G)\rangle \smallsetminus \Phi(G)$ and $v \in \langle y, \Phi(G)\rangle \smallsetminus \Phi(G)$, there exists some $g \in G$ such that $\theta(u)=(u^{-1})^g$ and $\theta(v)=(v^{-1})^g$.
Since the restriction of $\theta$ to $Z(G)=\langle t, w\rangle$ coincides with the inversion map, we can assume that
\begin{equation}
\label{u and v}
u=x^{1+2i_1}y^{2j_1}z^{k_1} \ \ \
\text{and} \ \ \
v=y^{1+2j_2}x^{2i_2}z^{k_2},
\end{equation}
where $1\leq i_{s}, j_{s}, k_{s} \leq 2^{e-1}$ for $s=1, 2$.

Now according to \cref{candidates for g }, we have
\[
u\theta(u)=[u^{-1}, y^{2k_1n-2j_1}] \ \ \
\text{and} \ \ \
v\theta(v)=[v^{-1}, x^{-2k_2m-2i_2}],
\]
where 
\begin{equation}
\label{inverses}
(1+2i_1)\cdot n \equiv 1 \pmod{2^e} \ \ \
\text{and} \ \ \
(1+2j_2)\cdot m \equiv 1 \pmod{2^e}.
\end{equation}
Then the equalities  $\theta(u)=(u^{-1})^g$ and $\theta(v)=(v^{-1})^g$ hold if and only if 
\[
[u^{-1}, y^{2k_1n-2j_1}]=[u^{-1},g] \ \ \
\text{and} \ \ \
[v^{-1}, x^{-2k_2m-2i_2}]=[v^{-1},g].
\]
Thus, such an element $g\in G$ exists if and only if
 \begin{equation}
 \label{elements in centralizer}
 gy^{2j_1-2k_1n} \in C_G(u) \ \ \
 \text{and}\ \ \
 gx^{2k_2m+2i_2} \in C_G(v).
 \end{equation}
 We will next show that $C_G(u)=\langle u \rangle Z(G)$ and $C_G(v)=\langle v \rangle Z(G)$.
 Clearly, $\langle u \rangle Z(G) \leq  C_G(u)$. 
 Conversely, let $c \in C_G(u)$ and write $c \equiv u^{\alpha}v^{\beta}[v,u]^{\mu} \pmod{Z(G)}$ where $\alpha, \beta, \mu \in \Z$.
 Then 
 \begin{align*}
 1=[u, c]&=[u, u^{\alpha}v^{\beta}[v,u]^{\mu}]\\
 &=[u, [v,u]^{\mu}][u, v^{\beta}]\\
 &=[u,v]^{\beta}[u,v,v]^{\binom{\beta}{2}}[u,v,u]^{\mu}
\end{align*}
if and only if $2^{e-1} \mid \mu$ and $2^e \mid \beta$.
Thus, $c\equiv u^{\alpha} \pmod{Z(G)}$ and hence $C_G(u)=\langle u \rangle Z(G)$, as desired.
Similarly, we have $C_G(v)=\langle v \rangle Z(G)$.

Thus by \eqref{elements in centralizer}, we need to check whether there exist $1\leq R, S\leq 2^{e}$ such that
 \begin{equation}
 \label{congruence of g}
u^Sy^{2k_1n-2j_1}\equiv v^Rx^{-2k_2m-2i_2}\pmod{Z(G)}.
 \end{equation}
 Now we will replace $u$ and $v$ with their values in \eqref{u and v}.
 Then by taking into account that modulo $Z(G)$ the group $G$ has nilpotency class $2$ and by \eqref{inverses}
 we obtain that \eqref{congruence of g} is equivalent to the following:
 \begin{multline}
 x^{(1+2i_1)S} y^{2j_1(S-1)+2k_1n}  z^{(1+2i_1)j_1S(S-1)+k_1S} \\
 \equiv
 x^{2i_2(R-1)-2k_2m}y^{(1+2j_2)R} z^{(1+2j_2)i_2R(R-1)-k_2R}
 \pmod{Z(G)}.
 \end{multline}
 As a consequence, we have the following three equations:
 \begin{equation}
 \label{powers of x}
 (1+2i_1)S
 \equiv 
 2i_2(R-1)-2k_2m
 \pmod{2^e},
\end{equation}
 \begin{equation}
\label{powers of y}
2j_1(S-1)+2k_1n
\equiv
(1+2j_2)R
\pmod{2^e},
\end{equation}
and 
\begin{equation}
\label{powers of z}
(1+2i_1)j_1S(S-1)+k_1S
\equiv
(1+2j_2)i_2R(R-1)-k_2R
\pmod{2^{e-1}}.
\end{equation}
 By taking into account \eqref{inverses}, if we multiply equations \eqref{powers of x} and \eqref{powers of y} side by side and then divide by $2$,  we get \eqref{powers of z}.
 
Thus, it only remains to check the existence of values of $R$ and $S$ satisfying equations \eqref{powers of x} and \eqref{powers of y}. 
Now we multiply both sides of \eqref{powers of x} by $-2j_1$, and both sides of \eqref{powers of y} by $1+2i_1$.
Then we sum up these two equations, so that the only unknown will be $R$, which has an odd coefficient. 
Hence, we can get the value of $R$. 
We next substitute $R$ in \eqref{powers of x}, and consequently we obtain $S$ since its coefficient is odd.

As a result, \eqref{congruence of g} has a solution in $R$ and $S$.
Therefore, we can take 
\[
g=u^Sy^{2k_1n-2j_1}=v^Rx^{-2k_2m-2i_2}d
\]
for some $d\in Z(G)$.
This completes the proof.
\end{proof}


\begin{thebibliography}{99}
	
\bibitem{AMM}
A.\ Ahmad, A.\ Magidin, and R.\ F.\ Morse,
Two generator $p$-groups of nilpotency class $2$ and their conjugacy classes,
\textit{Publ.\ Math.\ Debrecen } \textbf{81} (2012), 145--166.

\bibitem{BCG}
I.\ Bauer, F.\ Catanese, and F.\ Grunewald,
Beauville surfaces without real structures,
in: F.\ Bogomolov, Y.\ Tschinkel (eds.),
Geometric Methods in Algebra and Number Theory, in: Progress in Mathematics, vol. 235, pp. 1--42, Birkh\"auser, Boston (2005).
	
\bibitem{cat2}
F.\ Catanese,
Moduli spaces of surfaces and real structures,
\textit{Ann.\ Math.\/} \textbf{158} (2003), 577--592.

\bibitem{DdSMS}
J.\ D.\ Dixon, M.\ P.\ F.\ du Sautoy, A.\ Mann, and D.\ Segal, 
\textit{Analytic Pro-$p$ Groups},
Cambridge University Press, 1999.

\bibitem{fai}
B.\ Fairbairn, 
Strongly real Beauville groups,
in: I.\ Bauer, S.\ Garion, and A.\ Vdovina (eds.),
Beauville Surfaces and Groups,
in: Springer Proceedings in Mathematics and Statistics, vol. 123, 
pp. 41--61, Springer, Berlin (2015).

\bibitem{fai2}
B.\ Fairbairn, 
More on strongly real Beauville groups,
in: J.\ \v{S}ir\'a\v{n}, R.\ Jajcay (eds.),
Symmetries in Graphs, Maps and Polytopes 5th SIGMAP Workshop, West Malvern, UK, July 2014
in: Springer Proceedings in Mathematics and Statistics, vol. 159, 
pp. 129--146 (2016).

\bibitem{fai3}
B.\ Fairbairn, 
A new infinite family of non-abelian strongly real Beauville $p$-groups for every odd prime $p$,
\textit{Bull.\ Lond.\ Math.\ Soc.\ } \textbf{49}(4) (2017), 749--754.
	
\bibitem{fai4}
B.\ Fairbairn, 
Purely (non)-strongly real Beauville groups,
\textit{Arch.\ Math.\ }(2019), 
\texttt{https://doi.org/10.1007/s00013-018-1288-4}.

\bibitem{fai5}
B.\ Fairbairn,
Strongly Real Beauville Groups III,
in: F.\ Neumann, S.\ Schroll (eds.),
Galois Covers, Grothendieck-Teichm\"uller Theory and Dessins d'Enfant, Interactions between Geometry, Topology, Number Theory and Algebra, 
in: Springer Proceedings in Mathematics  \& Statistics, Springer, Berlin, 
to appear.

\bibitem{FG}
G.\ A.\ Fern\'andez-Alcober and \c{S}.\ G\"ul,
Beauville structures in finite $p$-groups,
\textit{J. Algebra} \textbf{474} (2017), 1--23.

\bibitem{FGo}
Y.\ Fuertes and G.\ Gonz\'alez-Diez,
On Beauville structures on the groups $S_n$ and $A_n$,
\textit{Math.\ Z.\ } \textbf{264}(4) (2010), 959--968.

\bibitem{FJ}
Y.\ Fuertes and G.\ A.\ Jones,
Beauville surfaces and finite groups,
\textit{J.\ Algebra} \textbf{340} (2011), 13--27.

\bibitem{gul}
\c{S}.\ G\"ul,
A note on strongly real Beauville $p$-groups, 
\texttt{arXiv:1607.08907 [math.GR]}.

\bibitem{gul2}
\c{S}.\ G\"ul,
An infinite family of strongly real Beauville $p$-groups,
\textit{Monatsh.\ Math.\ } \textbf{185}(4) (2018), 663--675.

\bibitem{men}
F.\ Menegazzo,
Automorphisms of $p$-groups with cyclic commutator subgroup,
\textit{Rend.\ Sem.\ Mat.\ Univ.\ Padova} \textbf{90} (1993), 81--101.

\end{thebibliography}
\end{document}